\documentclass[11pt,a4paper]{article}
\usepackage[T1]{fontenc}
\usepackage[utf8]{inputenc}
\usepackage[english]{babel}
\usepackage{lmodern}
\usepackage{amsmath,amssymb,amsthm,mathtools}
\usepackage[margin=23mm]{geometry}
\usepackage[unicode,hidelinks]{hyperref}
\usepackage[expansion=false]{microtype}
\hypersetup{
  pdftitle={Optimization Inverse Spectral Problem for the Schr\"odinger Operator on a Compact Graph},
  pdfauthor={Nur Valeev},
  pdfsubject={Realizability of spectral data, optimal potentials,
    independence of squared eigenfunctions, and a limiting representation}}
\numberwithin{equation}{section}
\theoremstyle{plain}
\newtheorem{theorem}{Theorem}[section]
\newtheorem{lemma}[theorem]{Lemma}
\newtheorem{proposition}[theorem]{Proposition}
\newtheorem{corollary}[theorem]{Corollary}
\theoremstyle{definition}

\theoremstyle{remark}
\newtheorem{remark}[theorem]{Remark}

\newcommand{\R}{\mathbb R}
\newcommand{\CC}{\mathbb C}
\newcommand{\HH}{\mathcal H}
\newcommand{\DD}{\mathcal D}
\newcommand{\Q}{\mathcal Q}
\newcommand{\Lq}{\mathcal L}
\newcommand{\ISM}{\operatorname{ISM}}
\newcommand{\Span}{\operatorname{span}}
\newcommand{\sn}{\operatorname{sn}}
\newcommand{\cn}{\operatorname{cn}}
\newcommand{\dn}{\operatorname{dn}}
\newcommand{\norm}[1]{\left\lVert #1\right\rVert}

\newcommand{\ip}[2]{\left\langle #1,#2\right\rangle}
\newcommand{\argmin}{\operatorname*{arg\,min}}

\title{Optimization Inverse Spectral Problem\protect\\
for the Schr\"odinger Operator on a Compact Graph}
\author{Nur Valeev}
\date{October 22, 2026}

\begin{document}
\maketitle

\begin{abstract}
We study the optimization inverse spectral problem (OISP) for the
first $m$ eigenvalues of the Schr\"odinger operator on a compact metric
graph. We prove the existence of a solution for every strictly
increasing finite set of prescribed eigenvalues. On a graph without
cycles, we obtain a representation of the optimal correction as a
linear combination of squared eigenfunctions, provided that the selected
eigenvalues are simple and separated from the next eigenvalue. On a
graph with cycles, under the same simplicity and separation assumptions,
we obtain a strong limiting representation of the optimal potential
in terms of eigenfunctions of approximating operators.

For a graph with a cycle, we give a counterexample with simple
eigenvalues and linearly dependent squared eigenfunctions. We also prove
that linear dependence of the squares of nonzero restrictions to an
edge of eigenfunctions corresponding to distinct eigenvalues forces
the potential on that edge to have an analytic representative.
\end{abstract}

\noindent\textbf{Keywords:} optimization inverse spectral problem,
metric graph, optimal potential, isospectral set,
squared eigenfunctions.

\section{Introduction}

The Schr\"odinger operator is one of the basic models of mathematical
physics and spectral theory. In the stationary problem, its eigenvalues
are interpreted as energy levels and its eigenfunctions as the
corresponding states. Similar spectral models arise in the theory of
vibrations, wave propagation, and stability. On a metric graph, the
operator acts on each edge by the differential expression
$\mathcal L[q]u=-u''+qu$; the potential $q$ describes a field or an
inhomogeneity of the medium, while the vertex conditions specify the
interaction between the channels.

The classical inverse spectral problem seeks to recover the
coefficients of an operator and the parameters of its boundary
conditions from a sufficiently complete set of spectral
characteristics. Even for the one-dimensional Sturm--Liouville
operator, a single spectrum does not, in general, determine the
potential uniquely. Borg's theorem uses two spectra corresponding to
different boundary conditions \cite{Borg}. Other classical data include
the spectrum together with norming constants, the spectral function,
and the Weyl--Titchmarsh function. The Gel'fand--Levitan and Marchenko
methods reduce reconstruction to integral equations
\cite{GL,Marchenko}; the geometry of spectral data and isospectral sets
is studied in detail in \cite{PT}. The central questions remain the
sufficiency of the data, uniqueness, the characterization of
admissible data, and the stability of reconstruction.

In experimental and computational problems, only a few of the lowest
energy levels or eigenfrequencies are often available, while a prior
model specifies a reference potential $q_0$. Reconstruction from
finite spectral data was studied, in particular, in \cite{Lowe}.
The requirement of minimal deviation from the reference model
provides a natural variational principle for selecting a potential
among those with the observed part of the spectrum. In the present
paper, for a fixed compact graph $\Gamma$ and data
$\Lambda=(\lambda_1^*,\ldots,\lambda_m^*)$, we seek
\[
 \widehat q\in\argmin\left\{\tfrac12\norm{q-q_0}_{L^2(\Gamma)}^2:
 q\in L^2(\Gamma;\R),\quad
 \lambda_k(q)=\lambda_k^*,\ k=1,\ldots,m\right\}.
\]
This \emph{optimization inverse spectral problem} (OISP) is equivalent
to finding the point nearest to $q_0$ in a level set of the spectral
map $q\mapsto(\lambda_1(q),\ldots,\lambda_m(q))$. Until regularity is
established, we call it an isospectral set; smoothness is not assumed
a priori. Even strict convexity of the objective does not guarantee
uniqueness of the nearest point, since the spectral constraints are
nonlinear.

The OISP admits two complementary interpretations in applications.
In identification, the reference model is corrected as little as
possible to reproduce measured frequencies or levels. In spectral
design, prescribed spectral characteristics are achieved by a minimal
change in the potential. Inverse problems in vibration mechanics
are used to recover mass and stiffness characteristics and to detect
damage \cite{Gladwell}. Metric graphs model quantum and wave networks
\cite{BK}; an example of an engineering inverse problem on a graph is
the diagnosis of inhomogeneities in an electrical network from
reflection coefficients \cite{Visco}. Below we study agreement with
exact eigenvalues; stability with respect to measurement errors
requires a separate analysis.

The optimization approach for the one-dimensional Sturm--Liouville
operator was considered by N.~F.~Valeev and Y.~Sh.~Ilyasov in
\cite{VI2018}. V.~A.~Sadovnichii, Ya.~T.~Sultanaev, and
N.~F.~Valeev studied matrix and vector formulations
\cite{SSVvector,SSVspace}, as well as the one-dimensional
Schr\"odinger operator on the entire real line \cite{SSVline}.
The relation between the multidimensional OISP and a nonlinear
boundary value problem was established in \cite{IVN}.
Recovery of the nearest potential from $m$ observed eigenvalues
was studied in \cite{IVm}, and the problem with a prescribed partial
trace in \cite{VITrace}. In these formulations, the existence and
structure of the nearest element of the isospectral set become
questions in their own right.

An additional mathematical motivation comes from nonlinear
equations. For a simple eigenvalue and a real normalized
eigenfunction $\varphi_k$, the first variation is
$D\lambda_k(q)h=\int_\Gamma h\varphi_k^2$. When the spectral
constraints are independent, the optimality condition expresses
the correction in terms of squared eigenfunctions. Substitution
of this formula into the eigenvalue equations gives a coupled
system of nonlinear Schr\"odinger equations; this relation was
studied in \cite{VI2018,IVN,IVm}. Further developments of the
approach to nonlinear problems include work on degenerate
$(p,q)$-Laplace equations \cite{IVpq} and on principal solutions
of defocusing Hartree equations \cite{ISVYHartree}.

A related direction consists of extremal spectral problems.
Q.~Wei, G.~Meng, and M.~Zhang studied extremal eigenvalues of
the Sturm--Liouville operator on balls in $L^1$
\cite{WeiMengZhang}. Polynomial integrability of the critical
systems for optimal eigenvalue gaps was investigated by
Y.~Tian, Q.~Wei, and M.~Zhang \cite{TianWeiZhang}.
M.~Zhao and J.~Qi considered minimization of the norm of a
potential with two prescribed first eigenvalues \cite{ZhaoQi}.
These works emphasize the role of nonlinear critical equations
in spectral optimization.

The theory of inverse problems on graphs takes account of data
at different boundary vertices and of the presence of cycles.
General formulations for Sturm--Liouville operators were studied
by V.~A.~Yurko \cite{Yurko}. In the works of N.~P.~Bondarenko
\cite{BondStar} and C.-F.~Yang and N.~P.~Bondarenko
\cite{BondLoop}, part of the potential is known a priori, and
the remaining part is recovered from specially chosen spectral
data. S.~A.~Buterin studied recovery of operators with global
delay on graphs from two spectra \cite{Buterin}; this is a
functional differential model. Below we consider a local
self-adjoint operator with fixed geometry and vertex conditions.
For the OISP on a graph, both realizability of finite data and
independence of squared eigenfunctions, which ensures regularity
of the constraints, are essential.

The aim of this paper is to justify the variational selection
of a potential and to determine the precise form of the
necessary optimality condition. We prove realizability of every
strictly increasing finite set of eigenvalues and existence of
a nearest potential. On a tree, squares of eigenfunctions
corresponding to distinct eigenvalues are independent; on a
circle, a Lam\'e example shows that simplicity of the eigenvalues
alone is insufficient. On an arbitrary fixed graph, independence
is established as a generic property: separately in the sense
of Baire category and of Lebesgue measure in constructed
finite-dimensional families. For a local minimizer with simple
selected eigenvalues, without assuming independence, we prove
a strong limiting representation in terms of eigenfunctions
of approximating operators. We use the standard theory of
quantum graphs and analytic perturbations from \cite{BK,Kato};
proofs of the main assertions are given below.

\section{The model and the optimization problem}\label{sec:model}

Let $\Gamma$ be a finite connected compact metric graph with at
least one edge; an edge $e$ is identified with $[0,\ell_e]$,
$\ell_e>0$. Loops and multiple edges are allowed.
The geometry of the graph and all vertex conditions are fixed.
Set
\[
 X=L^2(\Gamma;\R),\qquad \HH=L^2(\Gamma;\CC).
\]
We impose Dirichlet conditions at selected vertices of degree
one, forming a set $D$. At all other vertices, functions are
continuous and satisfy the $\delta$-condition
\begin{equation}\label{eq:vertex}
 \sum_{e\sim v}\partial_e u(v)=\alpha_v u(v),\qquad
 \alpha_v\in\R.
\end{equation}
Here the derivative is directed from the vertex into the edge:
$u'_e(0)$ or $-u'_e(\ell_e)$; the ends of a loop are counted
separately. For $\alpha_v=0$, these are the Kirchhoff conditions.

On the space $\Q$ of continuous functions on the graph belonging
to $\bigoplus_eH^1(0,\ell_e)$ and vanishing on $D$, consider the form
\begin{equation}\label{eq:form}
 \mathfrak a_q[u,w]=\sum_e\int_0^{\ell_e}
 (u'_e\overline{w'_e}+q_eu_e\overline{w_e})\,dx
 +\sum_{v\notin D}\alpha_vu(v)\overline{w(v)},\qquad q\in X.
\end{equation}
It is closed and bounded from below. The associated operator
$\Lq(q)u=-u''+qu$ is self-adjoint, has compact resolvent, and
has a common domain
$\DD_0\subset\bigoplus_eH^2(0,\ell_e)$ for all $q\in X$,
specified by the above vertex conditions. These standard
properties and the min--max principle follow from form theory
and one-dimensional embeddings \cite{BK,Kato}.
Denote the eigenvalues, counted with multiplicities, by
$\lambda_1(q)\leq\lambda_2(q)\leq\cdots\to+\infty$.

For a fixed $m\geq1$, set
\begin{equation}\label{eq:problem}
 \begin{gathered}
 F_m(q)=(\lambda_1(q),\ldots,\lambda_m(q)),\qquad
 \ISM_m(\Lambda)=\{q\in X:F_m(q)=\Lambda\},\\
 J(q)=\tfrac12\norm{q-q_0}_X^2\longrightarrow\min,
 \qquad q\in\ISM_m(\Lambda),\quad q_0\in X.
 \end{gathered}
\end{equation}
Below, the term \emph{isospectral set} (ISM) refers to fixing
the first $m$ eigenvalues. Fixing the full spectrum gives an
intersection of such sets over all $m$; its nonemptiness does
not follow automatically from finite-dimensional results.
Admissible ordered data lie in
$\Delta_m=\{\Lambda\in\R^m:\lambda_1^*\leq\cdots\leq\lambda_m^*\}$;
denote its interior by
$\Omega_m=\{\lambda_1^*<\cdots<\lambda_m^*\}$.

\begin{proposition}[Weak continuity of the spectral data]
\label{prop:weak}
Let $q_j,q\in X=L^2(\Gamma;\R)$ and
$q_j\rightharpoonup q$ in $X$. Then, for every fixed $k\geq1$,
\begin{equation}\label{eq:weak-spectral-continuity}
 \lambda_k(q_j)\longrightarrow\lambda_k(q).
\end{equation}
The eigenvalues are numbered in nondecreasing order, counting
multiplicities; no simplicity assumptions are required.
In particular, $F_m(q_j)\to F_m(q)$ in $\R^m$ for every fixed $m$.
\end{proposition}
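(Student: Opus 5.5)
Weak convergence alone does not control $\int q_j|u|^2$ uniformly in $L^2$, but on a compact graph it does control it uniformly on bounded subsets of $\Q$. The approach is therefore to show that the quadratic forms $\mathfrak a_{q_j}$ converge to $\mathfrak a_q$ uniformly on bounded subsets of $\Q$, in a relatively form-bounded sense, and then pass to the limit in the min--max characterization
\[
 \lambda_k(q)=\min_{\substack{L\subset\Q\\ \dim L=k}}\ \max_{u\in L,\ \norm{u}=1}\mathfrak a_q[u,u].
\]

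First I establish a uniform form bound. By the uniform boundedness principle, $M:=\sup_j\norm{q_j}_X<\infty$. The one-dimensional embedding on each edge gives, for every $\varepsilon>0$, $\norm{u}_\infty^2\le\varepsilon\norm{u'}^2+C_\varepsilon\norm{u}^2$. Hence
\[
 \Bigl|\int_\Gamma q_j|u|^2\Bigr|\le\norm{q_j}\,\norm{u}_{L^4}^2\le M\norm{u}_\infty\norm{u}\le\tfrac14\norm{u'}^2+C\norm{u}^2,
\]
and the vertex terms are controlled similarly. Consequently
\[
 \mathfrak a_{q_j}[u,u]\ge c\norm{u}_{\Q}^2-C\norm{u}^2
\]
with $c,C$ independent of $j$. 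In particular the $\lambda_k(q_j)$ are bounded below uniformly in $j$, and by testing the min--max with a fixed $k$-dimensional subspace they are also bounded above.

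Next comes the key compactness step. Let $u_j\to u$ weakly in $\Q$. Then $u_j\to u$ uniformly on $\Gamma$, by compactness of $H^1\hookrightarrow C$ on each edge. Since $|u_j|^2\to|u|^2$ strongly in $L^2$ and $q_j\rightharpoonup q$,
\[
 \int q_j|u_j|^2\to\int q|u|^2 .
\]
By a standard contradiction argument this also gives
\[
 \sup_{\norm{u}_\Q\le1}\Bigl|\int(q_j-q)|u|^2\Bigr|\to0 .
\]
Indeed, pick near-maximizers $u_j$, extract a weakly convergent subsequence, and use the convergence above together with $\int(q_j-q)|u_\infty|^2\to0$.

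Then I compare eigenvalues. For the upper bound, take $L$ spanned by the first $k$ eigenfunctions of $\Lq(q)$. It is a bounded set in $\Q$ after normalization in $L^2$, since $\mathfrak a_q$ controls the $\Q$-norm there. The uniform convergence then gives
\[
 \limsup_j\lambda_k(q_j)\le\lambda_k(q)+\lim_j\sup_{u\in L,\ \norm{u}=1}\Bigl|\int(q_j-q)|u|^2\Bigr|=\lambda_k(q).
\]
For the lower bound, take optimal subspaces $L_j$ spanned by eigenfunctions of $\Lq(q_j)$ for $\lambda_1(q_j),\dots,\lambda_k(q_j)$. The uniform coercivity and the upper bound on $\lambda_k(q_j)$ show that unit vectors in $L_j$ have $\Q$-norm bounded by some $R$ independent of $j$. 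Therefore
\[
 \mathfrak a_q[u,u]\le\mathfrak a_{q_j}[u,u]+R^2\sup_{\norm{w}_\Q\le1}\Bigl|\int(q_j-q)|w|^2\Bigr|
\]
on $L_j$, and min--max for $q$ yields
\[
 \lambda_k(q)\le\lambda_k(q_j)+o(1).
\]
No simplicity is used anywhere, since only the min--max characterization enters. The convergence $F_m(q_j)\to F_m(q)$ is immediate.

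I expect the main obstacle to be the uniform convergence of the forms on $\Q$-bounded sets. This is where weak convergence must be upgraded via the compact embedding into $C(\Gamma)$. A second point needing care is the uniform $\Q$-bound on the subspaces $L_j$, which requires the uniform coercivity from the first step.
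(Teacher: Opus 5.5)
Your proof is correct and follows essentially the paper's route. In both, the compact embedding $\Q\hookrightarrow C(\Gamma)$ upgrades weak convergence to $\sup_{\norm{u}_\Q\le1}|\int_\Gamma(q_j-q)|u|^2|\to0$, which is then fed into min--max; you prove this by sequential compactness and contradiction, while the paper uses a finite $\delta$-net of $\{|u|^2:\norm{u}_\Q\le1\}$. The only real difference is the last step: the paper turns the estimate into the relative bound $\varepsilon_j\norm{u}_\Q^2\le\theta_j b_q[u]$ for the coercive shifted limit form, which gives both inequalities at once with the rate $|\lambda_k(q_j)-\lambda_k(q)|\le\theta_j(\lambda_k(q)+c)$ and needs no coercivity uniform in $j$, whereas your separate upper and lower bounds rely on that uniform coercivity and on the uniform $\Q$-bound for the subspaces $L_j$.
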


\begin{proof}
Equip the common form domain $\Q$ with the norm
\[
 \norm{u}_{\Q}^2=
 \sum_e\bigl(\norm{u_e}_{L^2(0,\ell_e)}^2
               +\norm{u'_e}_{L^2(0,\ell_e)}^2\bigr)
 =\norm{u}_{\HH}^2+\norm{u'}_{\HH}^2.
\]
Set $h_j=q_j-q$. Weak convergence means that
$\ip{h_j}{f}_X\to0$ for every $f\in X$.
By the uniform boundedness principle,
\begin{equation}\label{eq:weak-potential-bound}
 M:=\sup_j\norm{h_j}_X<\infty.
\end{equation}
The geometry of the graph and its vertex conditions do not
depend on $j$, so the forms $\mathfrak a_{q_j}$ and
$\mathfrak a_q$ have the same domain $\Q$.

\emph{1.}
For $w\in H^1(0,\ell)$, choose a point $y\in[0,\ell]$ at which
$|w(y)|^2\leq\ell^{-1}\norm{w}_{L^2}^2$. Such a point exists by
continuity of $w$ and the definition of the mean value.
Since $|w|^2$ is absolutely continuous and
$(|w|^2)'=2\operatorname{Re}(w'\overline w)$, for every
$x\in[0,\ell]$ we have
\[
 |w(x)|^2\leq |w(y)|^2+2\int_0^\ell|w'||w|
 \leq\ell^{-1}\norm{w}_{L^2}^2
                      +2\norm{w'}_{L^2}\norm{w}_{L^2}.
\]
Young's inequality gives, for every $\eta>0$,
\[
 \norm{w}_\infty^2\leq
 \eta\norm{w'}_{L^2}^2+(\ell^{-1}+\eta^{-1})\norm{w}_{L^2}^2.
\]
Applying this estimate on all edges yields
\begin{equation}\label{eq:graph-trace-estimate}
 \norm{u}_{C(\Gamma)}^2\leq
 \eta\norm{u'}_{\HH}^2+C_\eta\norm{u}_{\HH}^2,
 \qquad
 C_\eta=\max_e\ell_e^{-1}+\eta^{-1}.
\end{equation}
Here $\norm{u}_{C(\Gamma)}=\max_e\norm{u_e}_\infty$;
the vertex values $u(v)$ are also bounded by this norm.

Set $L=\sum_e\ell_e$. Since $q\in X$,
$\norm{q}_{L^1(\Gamma)}\leq\sqrt L\norm{q}_X$.
Thus the number
\[
 A_q=\norm{q}_{L^1(\Gamma)}+\sum_{v\notin D}|\alpha_v|
\]
is finite, and for $u\in\Q$,
\[
 \left|\int_\Gamma q|u|^2+
             \sum_{v\notin D}\alpha_v|u(v)|^2\right|
 \leq A_q\norm{u}_{C(\Gamma)}^2.
\]
Choose $\eta=[2(A_q+1)]^{-1}$ and set $C_q=A_qC_\eta$.
Then $A_q\eta\leq1/2$, and \eqref{eq:form} and
\eqref{eq:graph-trace-estimate} give
\[
 \mathfrak a_q[u,u]\geq
 \tfrac12\norm{u'}_{\HH}^2-C_q\norm{u}_{\HH}^2.
\]
For $c=C_q+1$, the shifted form
$b_q[u]=\mathfrak a_q[u,u]+c\norm{u}_{\HH}^2$ satisfies
\begin{equation}\label{eq:shifted-coercivity}
 b_q[u]\geq\tfrac12\norm{u'}_{\HH}^2+\norm{u}_{\HH}^2
          \geq\gamma\norm{u}_{\Q}^2,
 \qquad \gamma=\tfrac12.
\end{equation}
The constants $c,\gamma$ are fixed and independent of $j$.

\emph{2.}
We show that the embedding $\Q\hookrightarrow C(\Gamma)$
is compact. By \eqref{eq:graph-trace-estimate}, functions in
the unit ball of $\Q$ are uniformly bounded on all edges.
Moreover, on each edge, the Cauchy--Schwarz inequality gives
\[
 |u_e(x)-u_e(y)|\leq
 |x-y|^{1/2}\norm{u'_e}_{L^2(0,\ell_e)},
 \qquad x,y\in[0,\ell_e].
\]
Hence this ball is equicontinuous on each edge.
By the Arzel\`a--Ascoli theorem, every sequence in it has a
subsequence converging uniformly on all edges simultaneously,
since the number of edges is finite. Continuity at the vertices
and vanishing on $D$ are preserved under uniform convergence.
The limit therefore belongs to $C(\Gamma)$, proving compactness
of the embedding.

The map $u\mapsto|u|^2$ is continuous from $C(\Gamma)$ to $X$,
because
\[
 \bigl\||u|^2-|w|^2\bigr\|_X
 \leq\sqrt L\bigl(\norm{u}_{C(\Gamma)}+
                    \norm{w}_{C(\Gamma)}\bigr)
                \norm{u-w}_{C(\Gamma)}.
\]
Consequently, the set
\begin{equation}\label{eq:compact-squares}
 \mathcal K=\{|u|^2:u\in\Q,\ \norm{u}_{\Q}\leq1\}
\end{equation}
is relatively compact in the real space $X$.
Using $|u|^2$ also allows us to work with complex functions
in the form domain.

\emph{3.}
Set
\[
 \varepsilon_j=\sup_{f\in\mathcal K}|\ip{h_j}{f}_X|.
\]
For every $\delta>0$, compactness of the closure of
$\mathcal K$ allows us to choose a finite $\delta$-net
$f_1,\ldots,f_N\in X$. For every $f\in\mathcal K$, there is
an $f_\nu$ with $\norm{f-f_\nu}_X<\delta$, and
\[
 |\ip{h_j}{f}_X|
 \leq |\ip{h_j}{f_\nu}_X|+\norm{h_j}_X\norm{f-f_\nu}_X
 \leq\max_{1\leq\nu\leq N}|\ip{h_j}{f_\nu}_X|+M\delta.
\]
The first term on the right tends to zero, since the net is
finite and $h_j\rightharpoonup0$. Thus
$\limsup_j\varepsilon_j\leq M\delta$ for every $\delta>0$.
Letting $\delta$ tend to zero, we conclude that
\begin{equation}\label{eq:uniform-form-convergence}
 \varepsilon_j=
 \sup_{\norm{u}_{\Q}\leq1}
 \left|\int_\Gamma(q_j-q)|u|^2\right|\longrightarrow0.
\end{equation}
By homogeneity, for every $u\in\Q$,
\begin{equation}\label{eq:form-difference-estimate}
 |\mathfrak a_{q_j}[u,u]-\mathfrak a_q[u,u]|
 \leq\varepsilon_j\norm{u}_{\Q}^2
 \leq\theta_j b_q[u],\qquad
 \theta_j=\varepsilon_j/\gamma\longrightarrow0.
\end{equation}
The vertex terms cancel in the difference of the forms,
since all $\alpha_v$ are fixed.

\emph{4.}
Set $b_{q_j}[u]=\mathfrak a_{q_j}[u,u]+c\norm{u}_{\HH}^2$,
with the same $c$ as in the first step.
For sufficiently large $j$, we have $0\leq\theta_j<1$;
\eqref{eq:form-difference-estimate} implies
\begin{equation}\label{eq:ordered-shifted-forms}
 (1-\theta_j)b_q[u]\leq b_{q_j}[u]
                    \leq(1+\theta_j)b_q[u],\qquad u\in\Q.
\end{equation}
Both shifted forms are positive. By the min--max principle,
\[
 \lambda_k(q)+c=
 \inf_{\substack{E\subset\Q\\\dim_{\CC}E=k}}
 \ \sup_{u\in E\setminus\{0\}}
          \frac{b_q[u]}{\norm{u}_{\HH}^2},
\]
and the analogous formula holds for $q_j$.
Divide \eqref{eq:ordered-shifted-forms} by
$\norm{u}_{\HH}^2$, take the supremum over nonzero $u\in E$,
and then the infimum over all $k$-dimensional subspaces
$E\subset\Q$. Since $1-\theta_j>0$, we obtain
\begin{equation}\label{eq:eigenvalue-sandwich}
 (1-\theta_j)(\lambda_k(q)+c)
 \leq\lambda_k(q_j)+c
 \leq(1+\theta_j)(\lambda_k(q)+c).
\end{equation}
The quantity $\lambda_k(q)+c$ is positive and fixed.
It follows that
\[
 |\lambda_k(q_j)-\lambda_k(q)|
 \leq\theta_j(\lambda_k(q)+c)\longrightarrow0.
\]
This proves \eqref{eq:weak-spectral-continuity}.
The min--max principle numbers eigenvalues with multiplicities,
so the proof also applies to multiple eigenvalues.
Componentwise convergence of the first $m$ eigenvalues gives
convergence of $F_m(q_j)$ in $\R^m$.
\end{proof}

\begin{remark}
	There is an alternative approach to Proposition~\ref{prop:weak}
	based on distributional potentials. Writing the potential
	edgewise as $q=\sigma'$, weak convergence in $L^2$ implies
	strong convergence of the corresponding primitives in $L^2$.
	Continuity of the eigenvalues can then be obtained from the
	continuity of the characteristic function. For an interval,
	this approach goes back to Savchuk and Shkalikov~\cite{SavchukShkalikov};
	quantitative stability results for characteristic functions on
	quantum trees were obtained by Bondarenko~\cite{BondarenkoStability}.
	The direct form argument given above applies uniformly to arbitrary
	compact metric graphs, including graphs with cycles.
\end{remark}

\begin{proposition}[Existence of an OISP solution]
\label{prop:existence}
Let $q_0\in X$, $\Lambda\in\Delta_m$, and
$\ISM_m(\Lambda)\ne\varnothing$. Then there exists
$\widehat q\in\ISM_m(\Lambda)$ such that
\begin{equation}\label{eq:attained-minimum}
 J(\widehat q)=\min_{q\in\ISM_m(\Lambda)}J(q).
\end{equation}
\end{proposition}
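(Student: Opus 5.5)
The proof will use the direct method of the calculus of variations, with Proposition~\ref{prop:weak} supplying the weak closedness of the constraint set. Set $d=\inf_{q\in\ISM_m(\Lambda)}J(q)$. Since $\ISM_m(\Lambda)\ne\varnothing$ and $J\geq0$, the number $d$ is finite. Choose a minimizing sequence $q_j\in\ISM_m(\Lambda)$ with $J(q_j)\to d$.

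\emph{Boundedness and extraction of a weak limit.} Since $J(q_j)$ is bounded, we have $\norm{q_j-q_0}_X\leq C$, and hence $\norm{q_j}_X\leq C+\norm{q_0}_X$. The space $X=L^2(\Gamma;\R)$ is a Hilbert space, so bounded sequences are weakly sequentially precompact. Passing to a subsequence, we may assume $q_j\rightharpoonup\widehat q$ in $X$ for some $\widehat q\in X$.

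\emph{The limit is admissible.} By Proposition~\ref{prop:weak}, $\lambda_k(q_j)\to\lambda_k(\widehat q)$ for each $k=1,\ldots,m$. No simplicity assumption is needed here, which matters because $\Lambda\in\Delta_m$ may have repeated entries. Since $\lambda_k(q_j)=\lambda_k^*$ for all $j$, it follows that $F_m(\widehat q)=\Lambda$, that is, $\widehat q\in\ISM_m(\Lambda)$. This is the only step that is not routine, since the constraint is nonlinear and not convex. Its difficulty, however, has already been handled by the proof of Proposition~\ref{prop:weak}: compactness of $\Q\hookrightarrow C(\Gamma)$ turns weak convergence of potentials into uniform convergence of the forms on bounded sets.

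\emph{Lower semicontinuity.} Since $q_j-q_0\rightharpoonup\widehat q-q_0$, weak lower semicontinuity of the Hilbert norm gives
\[
 J(\widehat q)\leq\liminf_{j\to\infty}J(q_j)=d.
\]
Combined with $\widehat q\in\ISM_m(\Lambda)$, which gives $J(\widehat q)\geq d$, this yields \eqref{eq:attained-minimum}.

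Uniqueness is not claimed, and the argument does not provide it.
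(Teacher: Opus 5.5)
Your proposal is correct and follows the paper's proof essentially step for step: a bounded minimizing sequence, a weakly convergent subsequence, admissibility of the limit via Proposition~\ref{prop:weak}, and weak lower semicontinuity of the norm. The only cosmetic difference is that the paper derives the lower semicontinuity directly from the identity $\norm{w_j}_X^2=\norm{w}_X^2+2\ip{w}{w_j-w}_X+\norm{w_j-w}_X^2$, whereas you cite it as a standard fact.
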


\begin{proof}\emph{1.}
Choose any $\widetilde q\in\ISM_m(\Lambda)$ and set
\[
 \beta=\inf_{q\in\ISM_m(\Lambda)}J(q).
\]
Since $J\geq0$ and $J(\widetilde q)<\infty$,
$0\leq\beta\leq J(\widetilde q)<\infty$.
By the definition of the infimum, for every $n\geq1$ there is
a $q_n\in\ISM_m(\Lambda)$ with
\[
 \beta\leq J(q_n)\leq\beta+1/n.
\]
Thus $J(q_n)\to\beta$, and
\begin{equation}\label{eq:minimizing-sequence-bound}
 \norm{q_n-q_0}_X^2=2J(q_n)\leq2(\beta+1),\qquad
 \norm{q_n}_X\leq\norm{q_0}_X+\sqrt{2(\beta+1)}.
\end{equation}
Hence the sequence is bounded in the potential space $X$.

\emph{2.}
The space $X$ is Hilbert and reflexive, so a bounded sequence
has a subsequence $q_{n_j}$ and a limit $\widehat q\in X$
such that $q_{n_j}\rightharpoonup\widehat q$ in $X$.
Each $q_{n_j}$ is admissible, that is,
$\lambda_k(q_{n_j})=\lambda_k^*$ for $k=1,\ldots,m$.
By Proposition~\ref{prop:weak},
\[
 \lambda_k(\widehat q)=\lim_{j\to\infty}\lambda_k(q_{n_j})
                     =\lambda_k^*,\qquad k=1,\ldots,m.
\]
Therefore $\widehat q\in\ISM_m(\Lambda)$.
This preservation of the spectral constraints under weak
convergence is precisely what the direct variational method
requires; it does not require choosing eigenfunctions or
establishing their convergence.

\emph{3.}
Set $w_j=q_{n_j}-q_0$ and $w=\widehat q-q_0$.
We have $w_j\rightharpoonup w$ in $X$, and the identity
\[
 \norm{w_j}_X^2=
 \norm{w}_X^2+2\ip{w}{w_j-w}_X+\norm{w_j-w}_X^2
\]
gives $\norm{w}_X^2\leq\liminf_j\norm{w_j}_X^2$:
the middle term tends to zero and the last term is nonnegative.
Thus weak lower semicontinuity of the objective follows directly
from the Hilbert space structure of $X$, and
\[
 J(\widehat q)\leq\liminf_{j\to\infty}J(q_{n_j})=\beta.
\]
On the other hand, admissibility of $\widehat q$ and the
definition of $\beta$ imply $J(\widehat q)\geq\beta$.
We obtain $J(\widehat q)=\beta$, proving
\eqref{eq:attained-minimum}.
\end{proof}

\section{Nonemptiness of the isospectral set}

\begin{theorem}\label{thm:realization}
For every $\Lambda\in\Omega_m$, there exists
$q\in L^\infty(\Gamma;\R)$ such that $F_m(q)=\Lambda$ and
$\lambda_m(q)<\lambda_{m+1}(q)$. Consequently,
$\ISM_m(\Lambda)\ne\varnothing$ for almost every
$\Lambda\in\Delta_m$ with respect to $m$-dimensional Lebesgue measure.
\end{theorem}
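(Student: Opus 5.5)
The plan is to construct $q$ explicitly as a family of deep, separated potential wells placed inside a single edge. I would then invoke a degree (Brouwer) argument to hit $\Lambda$ exactly. The final assertion is immediate once the first part is proved: $\Delta_m\setminus\Omega_m$ lies in the finite union of hyperplanes $\{\lambda_k^*=\lambda_{k+1}^*\}$, so it is a Lebesgue null set.

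\emph{Construction.} Fix an edge $e$ and choose $m$ pairwise disjoint closed intervals $I_1,\ldots,I_m$ of common length $\delta$ in the interior of $e$. For parameters $t=(t_1,\ldots,t_m)\in\R^m$ and $C>0$, set
\[
 q_{t,C}=t_j \ \text{on } I_j,\qquad q_{t,C}=C \ \text{on } \Gamma\setminus\textstyle\bigcup_j I_j .
\]
This potential is in $L^\infty$. Let $\mu=\pi^2/\delta^2$ be the first Dirichlet eigenvalue of an interval of length $\delta$; the second is $4\mu$. The model operator is the direct sum of the Dirichlet problems $-u''+t_ju$ on $I_j$. Its eigenvalues are the numbers $t_j+\mu$, $t_j+4\mu,\ldots$, arranged in nondecreasing order.

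Set $g=\min_k(\lambda_{k+1}^*-\lambda_k^*)>0$ and fix $r<g/4$. Choose $\delta$ so small that $3\mu>\lambda_m^*-\lambda_1^*+4r$. On the closed cube $B=\{t:|t_j-(\lambda_j^*-\mu)|\le r\}$ the following then hold:
\begin{itemize}
\item the first $m$ model eigenvalues are exactly $t_1+\mu<\cdots<t_m+\mu$, in this order;
\item the $(m+1)$st model eigenvalue is at least $\min_j t_j+4\mu$, which exceeds $t_m+\mu$ by more than $r$.
\end{itemize}

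\emph{Decoupling lemma (main obstacle).} I would prove that
\[
 \lambda_k(q_{t,C})\to\lambda_k^{\rm model}(t)\quad\text{as } C\to\infty,\ \text{uniformly for } t\in B,\ k\le m+1 .
\]

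The upper bound comes from min--max. Extend the model Dirichlet eigenfunctions by zero outside the wells; they lie in $\Q$ and give $\lambda_k(q_{t,C})\le\lambda_k^{\rm model}(t)$ exactly, since the vertex terms vanish because the supports are interior to $e$.

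For the lower bound I argue by contradiction. Suppose there are $C_n\to\infty$ and $t^n\to t$ in $B$ with $\lambda_k(q_{t^n,C_n})\le\lambda_k^{\rm model}(t)-\epsilon$. Take orthonormal eigenfunctions $u_1^n,\ldots,u_k^n$. Their form values are bounded, and $q\ge C_n$ off the wells. Using the coercivity estimate from the proof of Proposition~\ref{prop:weak}, both $\|u_i^n\|_{\Q}$ and $C_n\int_{\Gamma\setminus\cup I_j}|u_i^n|^2$ are bounded. The compact embedding $\Q\hookrightarrow C(\Gamma)$ then yields limits $u_i$ that are orthonormal, vanish outside the wells, and so lie in $\bigoplus_j H^1_0(I_j)$. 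Lower semicontinuity of $\int|u'|^2$ then contradicts min--max for the model. The main technical care is keeping the constants uniform in $t\in B$ and handling the negative part of $q$ when $t_j<0$; the latter is controlled by the trace estimate \eqref{eq:graph-trace-estimate}.

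\emph{Degree argument.} Fix $C$ so large that the error in the decoupling lemma is below $r/2$ on $B$. Then $\Phi(t)=F_m(q_{t,C})$ is continuous on $B$, by Proposition~\ref{prop:weak}, or directly, since $t\mapsto q_{t,C}$ is norm continuous. Moreover $\Phi$ differs from the translation $T(t)=t+\mu(1,\ldots,1)$ by less than $r/2$. Since $T(\partial B)$ stays at distance $r$ from $\Lambda=T(\lambda^*-\mu)$, the straight-line homotopy from $T$ to $\Phi$ never meets $\Lambda$ on $\partial B$. Hence $\deg(\Phi,B,\Lambda)=\deg(T,B,\Lambda)=1$. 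Equivalently, Brouwer's fixed point theorem applied to $t\mapsto t-\Phi(t)+\Lambda-\mu$ on $B$ gives the same conclusion. Either way there is $t\in B$ with $F_m(q_{t,C})=\Lambda$. Finally, $\lambda_{m+1}(q_{t,C})>\lambda_{m+1}^{\rm model}(t)-r/2>\lambda_m^{\rm model}(t)+r/2>\lambda_m(q_{t,C})$, so $\lambda_m<\lambda_{m+1}$ as required.
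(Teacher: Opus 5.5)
Your proposal is correct and follows the paper's proof essentially step by step. It uses the same construction (short wells on one edge with a high barrier), the same two bounds as the barrier grows (min--max from above, compactness from below), and the same Brouwer/degree argument on a cube of well depths; the only real difference is that you obtain uniformity in $t$ by a sequential contradiction argument ($t^n\to t$, $C_n\to\infty$) rather than the paper's Lipschitz-plus-finite-net argument. One small slip: the Brouwer map should be $t\mapsto t-\Phi(t)+\Lambda$, without the extra $-\mu$, because your cube is already centred at $\Lambda-\mu(1,\ldots,1)$; the degree formulation you give first is unaffected.
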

\begin{proof}
Fix the data $\lambda_1^*<\cdots<\lambda_m^*$.
The idea is to place $m$ short potential wells inside a single
edge. Their ground-state eigenvalues will be close to the
prescribed numbers, while a high barrier outside the wells
will nearly decouple the corresponding Dirichlet problems.
A small correction to the well depths will then produce exact
agreement with the spectral data.

\emph{1.}
For $m\geq2$, choose $\delta>0$ such that
\[
 4\delta<\min_{1\leq j<m}
                  (\lambda_{j+1}^*-\lambda_j^*).
\]
For $m=1$, take any $\delta>0$.
Fix an edge $e_0$ of length $\ell_{e_0}>0$.
Choose $a>0$ sufficiently small that $(0,\ell_{e_0})$
contains $m$ open intervals $I_j=(b_j,b_j+a)$ whose closures
are pairwise disjoint and separated from one another and from
the endpoints of the edge by positive distances, and that
\begin{equation}\label{eq:well-excited-gap}
 \lambda_1^*-\delta+\frac{3\pi^2}{a^2}
                           >\lambda_m^*+3\delta.
\end{equation}
Both requirements can be met for sufficiently small $a$.
Set
\[
 U=\bigcup_{j=1}^m I_j,\qquad V=\Gamma\setminus U,\qquad
 T=[-\delta,\delta]^m,\qquad
 p_j=\lambda_j^*-\frac{\pi^2}{a^2}.
\]
For $t=(t_1,\ldots,t_m)\in T$, consider the direct sum of the
operators $-d^2/dx^2+p_j+t_j$ on $I_j$, with Dirichlet
conditions at both endpoints. Its Hilbert space and form
domain are, respectively,
\[
 \HH_\infty=\bigoplus_{j=1}^m L^2(I_j;\CC),\qquad
 \Q_\infty=\bigoplus_{j=1}^m H^1_0(I_j;\CC).
\]
Identify their elements with their zero extensions to the
graph. Under this identification, $\Q_\infty\subset\Q$,
since the extensions are continuous, vanish at all vertices,
and satisfy the conditions on $D$. The limiting form is
\[
 \mathfrak a_{\infty,t}[u]
   =\sum_{j=1}^m\int_{I_j}
              \bigl(|u'|^2+(p_j+t_j)|u|^2\bigr)\,dx.
\]
The spectrum of each well is explicit:
\[
 \nu_{j,n}(t)=p_j+t_j+\frac{\pi^2n^2}{a^2}
       =\lambda_j^*+t_j+\frac{\pi^2(n^2-1)}{a^2},
 \qquad n=1,2,\ldots .
\]
Its ground-state eigenvalues
$\nu_{j,1}(t)=\lambda_j^*+t_j$ are strictly ordered:
the difference between adjacent eigenvalues is at least
$\lambda_{j+1}^*-\lambda_j^*-2\delta>2\delta$.
By \eqref{eq:well-excited-gap}, all other eigenvalues exceed
$\lambda_m^*+3\delta$. Hence, if $\mu_k(t)$ denotes the
eigenvalues of the direct sum in nondecreasing order, counting
multiplicities, then, for every $t\in T$,
\begin{equation}\label{eq:decoupled-well-levels}
 \mu_k(t)=\lambda_k^*+t_k\quad(1\leq k\leq m),
 \qquad \mu_{m+1}(t)>\lambda_m^*+3\delta.
\end{equation}

\emph{2.}
For $B\geq0$, define the piecewise constant potential
\begin{equation}\label{eq:barrier-potential}
 q_{B,t}(x)=
 \begin{cases}
    p_j+t_j,&x\in I_j,\quad 1\leq j\leq m,\\
    B,&x\in V.
 \end{cases}
\end{equation}
The values at individual endpoints of the intervals are
irrelevant. For every finite $B$, this potential belongs to
$L^\infty(\Gamma;\R)\subset X$. Write
\[
 \lambda_k(B,t)=\lambda_k(q_{B,t}),\qquad
 \mathfrak a_{B,t}[u]=\mathfrak a_{q_{B,t}}[u,u].
\]
The form domain $\Q$ is independent of $B$ and $t$, and
\[
 \mathfrak a_{B,t}[u]
 =\norm{u'}_{\HH}^2
   +\sum_{j=1}^m(p_j+t_j)\norm{u}_{L^2(I_j)}^2
   +B\norm{u}_{L^2(V)}^2
   +\sum_{v\notin D}\alpha_v|u(v)|^2.
\]
We obtain an estimate uniform in $B\geq0$ and $t\in T$.
Set $P=\max_j|p_j|+\delta$ and
$A=\sum_{v\notin D}|\alpha_v|$.
The integral term over the wells is bounded from below by
$-P\norm{u}_{\HH}^2$. From \eqref{eq:graph-trace-estimate},
with $\eta=[2(A+1)]^{-1}$, we obtain
\[
 \sum_{v\notin D}\alpha_v|u(v)|^2
 \geq -A\eta\norm{u'}_{\HH}^2-A C_\eta\norm{u}_{\HH}^2.
\]
Thus, with a constant $C=P+A C_\eta\geq0$,
\begin{equation}\label{eq:uniform-barrier-bound}
 \mathfrak a_{B,t}[u]\geq
 \tfrac12\norm{u'}_{\HH}^2
   +B\norm{u}_{L^2(V)}^2-C\norm{u}_{\HH}^2,
 \qquad u\in\Q.
\end{equation}
The constant $C$ depends on the chosen wells, the data, and
the vertex conditions, but not on $B$ or $t$.

\emph{3.}
We first obtain an upper bound. The first $k$ eigenfunctions
of the decoupled problem, extended by zero to $V$, span a
$k$-dimensional subspace of $\Q$. On this subspace,
$\mathfrak a_{B,t}$ agrees with $\mathfrak a_{\infty,t}$:
the barrier term and all vertex terms vanish.
The min--max principle therefore gives
\begin{equation}\label{eq:barrier-upper-bound}
 \lambda_k(B,t)\leq\mu_k(t),\qquad B\geq0.
\end{equation}
Moreover, for $B_2\geq B_1$,
\[
 \mathfrak a_{B_2,t}[u]-\mathfrak a_{B_1,t}[u]
       =(B_2-B_1)\norm{u}_{L^2(V)}^2\geq0.
\]
By the min--max principle, $\lambda_k(B,t)$ is nondecreasing
in $B$. Estimate \eqref{eq:uniform-barrier-bound} implies
$\lambda_k(B,t)\geq-C$. Hence the finite limit
\[
 \zeta_k(t)=\lim_{B\to+\infty}\lambda_k(B,t)
                         \leq\mu_k(t)
\]
exists.

We prove the reverse inequality. Fix $k\geq1$ and a sequence
$B_n\to+\infty$. For every $n$, choose eigenfunctions
$u_{1,n},\ldots,u_{k,n}$, orthonormal in $\HH$, corresponding
to the first $k$ eigenvalues of the operator with potential
$q_{B_n,t}$. They need not depend continuously on $B_n$;
in eigenspaces of multiple eigenvalues, choose any orthonormal
basis. By \eqref{eq:uniform-barrier-bound} and
\eqref{eq:barrier-upper-bound},
\begin{equation}\label{eq:barrier-eigenfunction-bound}
 \tfrac12\norm{u_{r,n}'}_{\HH}^2
       +B_n\norm{u_{r,n}}_{L^2(V)}^2
 \leq\lambda_r(B_n,t)+C\leq\mu_k(t)+C,
 \qquad 1\leq r\leq k.
\end{equation}
Thus these functions are bounded in $\Q$, and their
$L^2$-norms on $V$ tend to zero. Weak compactness of bounded
sets in $\Q$ and the compact embedding
$\Q\hookrightarrow C(\Gamma)$, proved in
Proposition~\ref{prop:weak}, allow us to pass to a common
subsequence for all $r=1,\ldots,k$ such that
\[
 u_{r,n}\rightharpoonup u_r\ \text{in }\Q,\qquad
 u_{r,n}\to u_r\ \text{in }C(\Gamma)\ \text{and in }\HH.
\]
The limits remain orthonormal:
$\ip{u_r}{u_s}_{\HH}=\delta_{rs}$.
Estimate \eqref{eq:barrier-eigenfunction-bound} shows that
$u_r=0$ almost everywhere on $V$. By continuity, the limits
also vanish at the endpoints of every $I_j$ and at all
vertices; here we use the positive distances separating the
closures of the wells from one another and from the vertices.
Consequently, $u_r\in\Q_\infty$.

For an arbitrary $z=(z_1,\ldots,z_k)\in\CC^k$, set
\[
 w_n=\sum_{r=1}^k z_r u_{r,n},\qquad
 w=\sum_{r=1}^k z_r u_r.
\]
Then $w_n\rightharpoonup w$ in $\Q$ and $w_n\to w$ in
$C(\Gamma)$ and $\HH$. Weak lower semicontinuity of the
derivative norm gives
\[
 \norm{w'}_{\HH}^2\leq\liminf_{n\to\infty}
                                      \norm{w_n'}_{\HH}^2.
\]
The integral terms over the wells converge to the corresponding
terms for $w$, since the coefficients $p_j+t_j$ are bounded
and $w_n\to w$ in $\HH$. The vertex terms tend to zero by
uniform convergence and the equalities $w(v)=0$.
Finally, the barrier term $B_n\norm{w_n}_{L^2(V)}^2$
is nonnegative. Therefore
\[
 \mathfrak a_{\infty,t}[w]
            \leq\liminf_{n\to\infty}\mathfrak a_{B_n,t}[w_n].
\]
Orthonormality and the eigenvalue equations give
\[
 \mathfrak a_{B_n,t}[w_n]
      =\sum_{r=1}^k|z_r|^2\lambda_r(B_n,t).
\]
Passing to the limit yields
\begin{equation}\label{eq:barrier-limit-subspace}
 \mathfrak a_{\infty,t}[w]
   \leq\sum_{r=1}^k|z_r|^2\zeta_r(t)
   \leq\zeta_k(t)\norm{w}_{\HH}^2.
\end{equation}
The subspace
$E=\Span\{u_1,\ldots,u_k\}\subset\Q_\infty$ has dimension $k$.
Applying the min--max principle in $\HH_\infty$, we conclude
from \eqref{eq:barrier-limit-subspace} that
\[
 \mu_k(t)\leq
   \sup_{0\ne w\in E}
        \frac{\mathfrak a_{\infty,t}[w]}{\norm{w}_{\HH}^2}
   \leq\zeta_k(t)\leq\mu_k(t).
\]
All inequalities are equalities. Thus
\begin{equation}\label{eq:barrier-pointwise-convergence}
 \lambda_k(B,t)\longrightarrow\mu_k(t)
       \quad(B\to+\infty)
\end{equation}
for every fixed $k$ and $t$.

\emph{4.}
For $s,t\in T$, potentials with the same barrier satisfy
\[
 \norm{q_{B,t}-q_{B,s}}_{L^\infty(\Gamma)}
                  \leq\norm{t-s}_\infty.
\]
Consequently, for every $u\in\Q$,
\[
 |\mathfrak a_{B,t}[u]-\mathfrak a_{B,s}[u]|
                 \leq\norm{t-s}_\infty\norm{u}_{\HH}^2.
\]
Applying the min--max principle in both directions gives
\begin{equation}\label{eq:barrier-parameter-lipschitz}
 |\lambda_k(B,t)-\lambda_k(B,s)|
                       \leq\norm{t-s}_\infty.
\end{equation}
The same estimate holds for $\mu_k(t)$ on $\Q_\infty$.
In particular, both functions of the parameter are continuous,
with a Lipschitz constant independent of $B$.

Let $\varepsilon>0$. The compact cube $T$ has a finite
$\varepsilon/4$-net $t^{(1)},\ldots,t^{(N)}$ in the norm
$\norm{\cdot}_\infty$. By
\eqref{eq:barrier-pointwise-convergence}, there is a $B_0$
such that, for $B\geq B_0$, all points of the net, and
all $k=1,\ldots,m+1$,
\[
 |\lambda_k(B,t^{(\nu)})-\mu_k(t^{(\nu)})|
                                      <\varepsilon/2.
\]
For an arbitrary $t\in T$, choose a point of the net with
$\norm{t-t^{(\nu)}}_\infty<\varepsilon/4$.
The Lipschitz estimates for both eigenvalues then give
\[
 |\lambda_k(B,t)-\mu_k(t)|
 \leq 2\norm{t-t^{(\nu)}}_\infty
       +|\lambda_k(B,t^{(\nu)})-\mu_k(t^{(\nu)})|
 <\varepsilon.
\]
Hence
\begin{equation}\label{eq:barrier-uniform-convergence}
 \max_{1\leq k\leq m+1}\
 \sup_{t\in T}|\lambda_k(B,t)-\mu_k(t)|
                                 \longrightarrow0.
\end{equation}

\emph{5.}
Choose and fix a finite $B\geq0$ sufficiently large that the
maximum in \eqref{eq:barrier-uniform-convergence} is smaller
than $\delta/2$. By \eqref{eq:decoupled-well-levels},
\begin{equation}\label{eq:barrier-residual}
 F_m(q_{B,t})=\Lambda+t+r_B(t),\qquad
 \sup_{t\in T}\norm{r_B(t)}_\infty<\delta/2.
\end{equation}
The map $r_B:T\to\R^m$ is continuous by
\eqref{eq:barrier-parameter-lipschitz}. Thus
\[
 \Psi_B(t)=-r_B(t)
\]
maps the compact convex cube $T$ continuously into itself,
indeed into $[-\delta/2,\delta/2]^m$.
By Brouwer's theorem, there is a $t^*\in T$ with
$t^*=\Psi_B(t^*)=-r_B(t^*)$.
Substituting this equality into \eqref{eq:barrier-residual}
gives $F_m(q_{B,t^*})=\Lambda$.

We verify separately the gap after the $m$th eigenvalue.
For every $t\in T$, \eqref{eq:decoupled-well-levels}
and the chosen uniform estimate imply
\[
 \lambda_{m+1}(B,t)
   >\mu_{m+1}(t)-\delta/2
   >\lambda_m^*+2\delta.
\]
In particular, for $q=q_{B,t^*}$,
$\lambda_m(q)=\lambda_m^*<\lambda_{m+1}(q)$.
The potential $q$ is bounded, since
$\norm{q}_{L^\infty}\leq\max\{B,P\}<\infty$.
This proves the first assertion of the theorem.

\emph{6. }
Every strictly ordered tuple $\Lambda\in\Omega_m$ is realized
by the constructed potential, so
$\ISM_m(\Lambda)\ne\varnothing$.
For $m\geq2$, the set $\Delta_m\setminus\Omega_m$
is contained in the union of $m-1$ hyperplanes
\[
 \bigcup_{j=1}^{m-1}
       \{\Lambda\in\R^m:\lambda_j^*=\lambda_{j+1}^*\}.
\]
Each hyperplane has zero $m$-dimensional Lebesgue measure,
and so does their finite union. For $m=1$,
$\Omega_1=\Delta_1=\R$, and the exceptional set is empty.
Thus the isospectral set is nonempty for almost every
tuple of data in $\Delta_m$.
\end{proof}
\begin{remark}
	For special star graphs, related finite realizability statements
	can be derived from the inverse spectral characterizations obtained
	by Pivovarchik~\cite{PivovarchikSimple,PivovarchikStar} and
	Bondarenko~\cite{BondarenkoCharacterization,
		BondarenkoMatrix}. Those results concern the completion of spectral
	data satisfying appropriate asymptotic and compatibility conditions.
	
	Theorem~\ref{thm:realization} has a different scope. It concerns an
	arbitrary fixed compact metric graph, possibly containing cycles and
	having arbitrary edge lengths and fixed vertex conditions. It directly
	realizes any prescribed strictly increasing finite tuple as the first
	$m$ eigenvalues and simultaneously guarantees the spectral gap
	$\lambda_m(q)<\lambda_{m+1}(q)$.
\end{remark}
The theorem does not settle the question for all tuples with
multiple eigenvalues or the admissibility of an arbitrary
full spectrum.

\section{Representation of an OISP solution for the Schr\"odinger operator
on a graph without cycles}

Set
\[
 \mathcal S_m=\{q:\lambda_1(q)<\cdots<\lambda_m(q)
                         <\lambda_{m+1}(q)\}.
\]
On $\mathcal S_m$, choose real normalized eigenfunctions
$\varphi_k(q)$. Locally, the eigenvalues and eigenfunctions
are analytic in $q\in X$; the eigenfunctions can be chosen
analytically with values in $\DD_0$, and their signs do not
affect their squares. The first variation formula
\cite{Kato,Roehrl} is
\begin{equation}\label{eq:HF}
 D\lambda_k(q)h=\ip{h}{g_k(q)}_X,
 \quad g_k(q)=\varphi_k(q)^2,
 \quad DF_m(q)^*a=\sum_{k=1}^m a_kg_k(q).
\end{equation}
Denote the Gram matrix by
$G(q)=(\ip{g_i(q)}{g_j(q)}_X)_{i,j=1}^m$ and set
\begin{equation}\label{eq:regularset}
 d_m(q)=\det G(q),\qquad
 \mathcal R_m=\{q\in\mathcal S_m:d_m(q)>0\}.
\end{equation}
Regularity, that is, surjectivity, of
$DF_m(q):X\to\R^m$ is equivalent to linear independence of
$g_1(q),\ldots,g_m(q)$.

Ilyasov and Valeev \cite{IVm} proved an assertion on the
independence of squared eigenfunctions of the Schr\"odinger
operator. Here, using and developing the idea of that work,
we prove an analogous assertion.

\begin{theorem}[Independence of squared eigenfunctions on graph edges]
\label{thm:edge}
Let $n\geq1$, $\ell>0$, $q\in L^2(0,\ell;\R)$, and let
$\mu_1,\ldots,\mu_n\in\R$ be pairwise distinct.
Let real functions $y_k\in H^2(0,\ell)$, $y_k\not\equiv0$,
satisfy
\[
 -y_k''+qy_k=\mu_ky_k
       \quad\text{almost everywhere on }(0,\ell),\qquad k=1,\ldots,n.
\]
Suppose that, at the same endpoint $x_*\in\{0,\ell\}$,
all functions satisfy a common Dirichlet or a common Robin
condition:
\[
 \begin{aligned}
  &y_k(x_*)=0
       &&\text{for all }k\quad\text{(Dirichlet)},\\
  &y_k'(x_*)=\beta y_k(x_*)
       &&\text{for all }k\quad\text{(Robin)},
 \end{aligned}
\]
where, in the Robin case, $\beta\in\R$ is the same for all $k$.
Then $y_1^2,\ldots,y_n^2$ are linearly independent in
$L^2(0,\ell;\R)$.
\end{theorem}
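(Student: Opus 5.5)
The plan is to start from an arbitrary linear relation $\sum_{k=1}^n c_k y_k^2=0$ on $(0,\ell)$ and to derive from it a new relation with coefficients $c_k\mu_k$. Iterating this step then yields $\sum_k c_k\mu_k^j y_k^2=0$ for all $j=0,1,\ldots,n-1$. Since the $\mu_k$ are pairwise distinct, the Vandermonde matrix $(\mu_k^j)$ is invertible. Hence $c_k y_k^2\equiv0$ for each $k$, and $y_k\not\equiv0$ forces $c_k=0$. Throughout I write $w_k=y_k^2$.

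The key computation uses only $y_k\in H^2(0,\ell)$ and $q\in L^2$, so it never differentiates $q$. Each $w_k$ lies in $H^2$, because $H^2$ is an algebra in one dimension. We have $w_k'=2y_ky_k'$ and, almost everywhere,
\[
 w_k''=2y_k'^2+2y_ky_k''=2y_k'^2+2(q-\mu_k)w_k .
\]
From $\sum c_kw_k=0$ we get $\sum c_kw_k'=0$ and $\sum c_kw_k''=0$ almost everywhere. The $q$-term cancels because $\sum c_kw_k=0$, which gives
\[
 \sum_k c_k y_k'^2=\sum_k c_k\mu_k w_k \qquad\text{on }[0,\ell]. \tag{$*$}
\]
Both sides of $(*)$ are absolutely continuous, so $(*)$ holds pointwise everywhere.

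Next I differentiate the left side of $(*)$. Using $y_k''=(q-\mu_k)y_k$, it becomes $\sum c_k(q-\mu_k)w_k'$. Since $\sum c_kw_k'=0$, this equals $-\sum c_k\mu_kw_k'$. The derivative of the right side of $(*)$ is $\sum c_k\mu_kw_k'$. Equating the two gives $\sum c_k\mu_k w_k'=0$ almost everywhere. Therefore $\sum c_k\mu_k w_k$ is constant on $[0,\ell]$.

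The remaining point, which I expect to be the only delicate one, is to show that this constant is zero by means of the boundary condition at $x_*$.
\begin{itemize}
\item In the Dirichlet case $w_k(x_*)=0$ for every $k$, so the constant vanishes at once.
\item In the Robin case I evaluate $(*)$ at $x_*$. There $y_k'(x_*)^2=\beta^2w_k(x_*)$, so the left side equals $\beta^2\sum c_kw_k(x_*)=0$. Hence $\sum c_k\mu_kw_k(x_*)=0$ and the constant again vanishes.
\end{itemize}
The common $\beta$ (or the common Dirichlet condition) is essential at exactly this step.

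Thus $\sum c_k\mu_kw_k\equiv0$. This is a relation of the same form as the original one, with coefficients $c_k\mu_k$, and the $y_k$ still satisfy the same equations and boundary condition. The argument therefore applies again, and by induction $\sum_k c_k\mu_k^jw_k=0$ for all $j\ge0$. The Vandermonde step described at the start then concludes the proof.
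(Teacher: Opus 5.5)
Your proof is correct and follows the paper's argument step by step. Your relation $(*)$ is the paper's identity $T_r=S_{r+1}$ derived from $S_r\equiv0$, and differentiating it gives $S_{r+1}'=0$. The boundary evaluation at $x_*$ then removes the constant exactly as in the paper, including the Robin case via $T_r(x_*)=\beta^2S_r(x_*)=0$, and the Vandermonde argument concludes; the only cosmetic difference is that you iterate by replacing $c_k$ with $c_k\mu_k$ rather than introducing the moment sequences $S_r,T_r$.
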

\begin{proof}
We present the proof in a form applicable to a single edge
and verify the boundary step for a common Robin condition
as well.

\emph{1.}
In one dimension, the embedding
$H^1(0,\ell)\hookrightarrow C[0,\ell]$ is continuous.
Thus $y_k\in H^2(0,\ell)$ implies $y_k\in C^1[0,\ell]$;
$y_k$ and $y_k'$ are bounded, and $y_k'$ is absolutely
continuous. The product rule for weak derivatives gives,
almost everywhere,
\begin{equation}\label{eq:edge-products}
 \begin{aligned}
 (y_k^2)'&=2y_ky_k',\\
 (y_k^2)''&=2(y_k')^2+2y_ky_k'',\\
 ((y_k')^2)'&=2y_k'y_k''.
 \end{aligned}
\end{equation}
Since $y_k''\in L^2$ and $y_k,y_k'$ are bounded, all
right-hand sides involving the second derivative belong
to $L^2(0,\ell)$. In particular,
\[
 y_k^2\in H^2(0,\ell),\qquad (y_k')^2\in H^1(0,\ell).
\]
The traces $y_k(x_*)$ and $y_k'(x_*)$ are therefore defined.

Suppose that, for some $c_1,\ldots,c_n\in\R$,
\[
 \sum_{k=1}^n c_ky_k^2=0
               \quad\text{in }L^2(0,\ell).
\]
We prove that all $c_k$ vanish.
The sum on the left is continuous on $[0,\ell]$.
Its vanishing almost everywhere therefore implies vanishing
at every point, including the endpoints.

\emph{2.}
For every integer $r\geq0$, define
\begin{equation}\label{eq:edge-moments}
 S_r=\sum_{k=1}^n c_k\mu_k^r y_k^2,\qquad
 T_r=\sum_{k=1}^n c_k\mu_k^r(y_k')^2.
\end{equation}
We have $S_r\in H^2(0,\ell)$ and $T_r\in H^1(0,\ell)$.
We use their continuous representatives also at the
endpoints. By assumption, $S_0\equiv0$.

From the eigenvalue equations $y_k''=(q-\mu_k)y_k$
and \eqref{eq:edge-products}, we obtain
\[
 \begin{aligned}
 S_r''
 &=2\sum_{k=1}^n c_k\mu_k^r
                    \bigl((y_k')^2+y_ky_k''\bigr)\\
 &=2T_r+2\sum_{k=1}^n c_k\mu_k^r(q-\mu_k)y_k^2\\
 &=2T_r+2qS_r-2S_{r+1}.
 \end{aligned}
\]
Similarly,
\[
 \begin{aligned}
 T_r'
 &=2\sum_{k=1}^n c_k\mu_k^r y_k'y_k''\\
 &=2q\sum_{k=1}^n c_k\mu_k^r y_ky_k'
       -2\sum_{k=1}^n c_k\mu_k^{r+1}y_ky_k'\\
 &=qS_r'-S_{r+1}'.
 \end{aligned}
\]
Thus, for all $r\geq0$,
\begin{equation}\label{eq:ST}
 \begin{aligned}
 S_r''&=2T_r+2qS_r-2S_{r+1},\\
 T_r'&=qS_r'-S_{r+1}'.
 \end{aligned}
\end{equation}
These equalities hold almost everywhere, hence in the sense
of weak derivatives. The products $qS_r$ and $qS_r'$ belong
to $L^2$, since $S_r,S_r'$ are continuous and bounded.
The identities use only $q$ and the eigenvalue equations;
no smoothness of $q$ beyond membership in $L^2$ is required.

\emph{3.}
Suppose that $S_r\equiv0$ has been proved for some $r\geq0$.
Then $S_r'=S_r''=0$, and the first identity in
\eqref{eq:ST} gives
\begin{equation}\label{eq:edge-next-moment}
 T_r=S_{r+1}.
\end{equation}
This equality is initially obtained almost everywhere,
but both functions are continuous, so it holds on all
of $[0,\ell]$. The second identity in \eqref{eq:ST} gives
\[
 T_r'=-S_{r+1}'.
\]
On the other hand, differentiating
\eqref{eq:edge-next-moment} weakly gives $T_r'=S_{r+1}'$.
Comparing the equalities, we obtain $2S_{r+1}'=0$
almost everywhere. Since $S_{r+1}\in H^1(0,\ell)$
is absolutely continuous, it is constant.
By \eqref{eq:edge-next-moment}, $T_r$ is constant as well:
\begin{equation}\label{eq:edge-constant-moment}
 S_{r+1}(x)=T_r(x)=C_r,\qquad x\in[0,\ell].
\end{equation}

\emph{4.}
We verify that $C_r=0$ in both cases under consideration.

Under the Dirichlet condition, all $y_k(x_*)=0$, and hence
\[
 S_{r+1}(x_*)=\sum_{k=1}^n
                  c_k\mu_k^{r+1}y_k(x_*)^2=0.
\]
It follows from \eqref{eq:edge-constant-moment} that $C_r=0$.

Under the Robin condition, all $y_k'(x_*)=\beta y_k(x_*)$
with the same $\beta$. Therefore
\[
 \begin{aligned}
 T_r(x_*)
 &=\sum_{k=1}^n c_k\mu_k^r(y_k'(x_*))^2\\
 &=\beta^2\sum_{k=1}^n c_k\mu_k^r y_k(x_*)^2
   =\beta^2S_r(x_*)=0.
 \end{aligned}
\]
The last equality follows from the induction hypothesis.
Again, \eqref{eq:edge-constant-moment} gives $C_r=0$.
This includes the Neumann condition when $\beta=0$.
The boundary step uses only the traces of $y_k$ and $y_k'$,
whose existence was established in the first step.

We have proved the implication
$S_r\equiv0\Longrightarrow S_{r+1}\equiv0$.
Starting from $S_0\equiv0$, induction gives
\begin{equation}\label{eq:edge-vanishing-moments}
 S_r(x)=0\qquad
        \text{for all }r\geq0\text{ and }x\in[0,\ell].
\end{equation}

\emph{5.}
Fix an arbitrary $x\in[0,\ell]$.
Equalities \eqref{eq:edge-vanishing-moments}, for
$r=0,\ldots,n-1$, form the system
\[
 \sum_{k=1}^n \mu_k^r\,c_ky_k(x)^2=0,
                  \qquad r=0,\ldots,n-1.
\]
Its matrix
\[
 \mathcal V_\mu=(\mu_k^r)_{
             \substack{0\leq r\leq n-1\\1\leq k\leq n}}
\]
is a Vandermonde matrix, and
\[
 \det\mathcal V_\mu
       =\prod_{1\leq i<j\leq n}(\mu_j-\mu_i)\ne0,
\]
since the $\mu_k$ are pairwise distinct.
Thus the only solution of the system for the unknowns
$c_ky_k(x)^2$ is zero. Since $x$ was arbitrary,
\[
 c_ky_k(x)^2=0
       \quad\text{for all }x\in[0,\ell],\quad k=1,\ldots,n.
\]
Integration gives $c_k\norm{y_k}_{L^2(0,\ell)}^2=0$.
All $y_k$ are nonzero, so their squared norms are strictly
positive and $c_k=0$ for every $k$.
This proves linear independence.
\end{proof}

\begin{theorem}\label{thm:tree}
On a tree, for every $q\in X$, the squares of nonzero real
eigenfunctions corresponding to distinct eigenvalues are
linearly independent. In particular, $\mathcal R_m=\mathcal S_m$.
\end{theorem}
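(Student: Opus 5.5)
Suppose $\sum_k c_k\varphi_k^2=0$ on $\Gamma$, where $\varphi_1,\ldots,\varphi_n$ are nonzero real eigenfunctions with pairwise distinct eigenvalues $\mu_1,\ldots,\mu_n$. The plan is to peel the tree edge by edge from its leaves and apply Theorem~\ref{thm:edge} on each edge. We must show every $c_k=0$.

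The argument is an induction that establishes vanishing of $c_k$ only for those $k$ whose eigenfunction is nonzero on a given edge. Then we propagate through the tree.

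\emph{Step 1 (pendant edges).} Let $e$ be an edge incident to a leaf $v$. At $v$ all $\varphi_k$ satisfy a common condition: Dirichlet if $v\in D$, and otherwise the $\delta$-condition of degree one, $\partial_e\varphi_k(v)=\alpha_v\varphi_k(v)$. The latter is a common Robin condition with $\beta=\alpha_v$, up to the orientation sign. Restricting the relation to $e$ and discarding the $k$ with $\varphi_k|_e\equiv0$, Theorem~\ref{thm:edge} gives $c_k=0$ for every $k$ with $\varphi_k|_e\not\equiv0$.

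\emph{Step 2 (induction).} Root the tree at some vertex and process edges in order of decreasing distance from the root. Consider an edge $e=[v,w]$ with $w$ farther from the root, and suppose all edges below $w$ have been processed. Let $K_e=\{k:\varphi_k|_e\not\equiv0\}$. The difficulty is that at $w$ the conditions are generally not common across $k$: continuity together with $\sum\partial\varphi_k(w)=\alpha_w\varphi_k(w)$ couples $e$ to the child edges. Split $K_e$ according to whether $\varphi_k$ vanishes on all child edges of $w$.

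\emph{Case (a).} For such $k$, continuity gives $\varphi_k(w)=0$. The vertex condition then gives $\partial_e\varphi_k(w)=0$, since the child contributions are zero, and uniqueness for the ODE forces $\varphi_k|_e\equiv0$. This contradicts $k\in K_e$, so case (a) is empty.

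\emph{Case (b).} Every $k\in K_e$ is nonzero on some child edge, so $c_k=0$ by the inductive hypothesis applied to that child edge. Hence on every edge, $c_k=0$ for all $k$ with $\varphi_k|_e\not\equiv0$.

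Since each nonzero $\varphi_k$ is nonzero on at least one edge, all $c_k$ vanish. In fact, once Step 2 is organized this way, Theorem~\ref{thm:edge} is needed only at leaves. The induction in Step 2 covers edges nested between processed subtrees.

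The main obstacle is to make the induction statement precise so that it actually propagates. The claim to prove for each edge $e$ is: $c_k=0$ for every $k$ with $\varphi_k$ not identically zero on the subtree hanging below $e$ (including $e$ itself). At a leaf edge this claim comes from Theorem~\ref{thm:edge}. At an internal edge $e$, any $k$ nonzero on the subtree is nonzero either on some child subtree, handled by induction, or only on $e$. In the latter situation $\varphi_k(w)=0$ and $\partial_e\varphi_k(w)=0$, so $\varphi_k|_e\equiv0$, a contradiction. Applied at the root, the claim yields all $c_k=0$.

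Finally, for $q\in\mathcal S_m$ the eigenfunctions $\varphi_1(q),\ldots,\varphi_m(q)$ are nonzero and have distinct eigenvalues. Therefore $g_1,\ldots,g_m$ are independent, so $d_m(q)>0$ and $\mathcal R_m=\mathcal S_m$.
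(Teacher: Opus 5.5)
Your proof is correct and rests on the same two ingredients as the paper. The first is Theorem~\ref{thm:edge}, applied on pendant edges under the common Dirichlet or Robin condition at the leaf. The second is the observation that an eigenfunction vanishing on all but one edge at an interior vertex also vanishes on that edge, by continuity, the $\delta$-condition and Cauchy uniqueness. The paper phrases the second ingredient as ``every leaf of the forest of active edges is a leaf of the tree,'' so each eigenfunction is active on some pendant edge; your leaf-to-root induction is an unwound version of this argument.
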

\begin{proof}
Call an edge active for an eigenfunction $u$ if $u$ is not
identically zero on it. The active edges form a nonempty
forest. A leaf of this forest must be a leaf of the entire
tree: otherwise, an inactive edge would be incident to it,
implying $u(v)=0$, while \eqref{eq:vertex} would give zero
derivative on the sole active edge. Uniqueness for the
Cauchy problem contradicts activity of that edge.
Thus $u$ is nonzero on some external edge.
Restrict a presumed dependence of the squares to each
external edge and apply Theorem~\ref{thm:edge} at its
external endpoint. The coefficient of each eigenfunction
is zero, since that function is active on at least one
such edge.
\end{proof}

\begin{theorem}\label{thm:finite}
Let $q^*\in\ISM_m(\Lambda)\cap\mathcal R_m$ be a local
minimizer of \eqref{eq:problem}. Then there exist unique
$a_1,\ldots,a_m\in\R$ such that
\begin{equation}\label{eq:finite}
 q^*=q_0-\sum_{k=1}^m a_k\varphi_k(q^*)^2.
\end{equation}
In particular, this formula holds on a tree when
$q^*\in\mathcal S_m$.
\end{theorem}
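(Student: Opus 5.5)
The plan is to apply the Lagrange multiplier rule on the Hilbert space $X$, using the regularity encoded in $\mathcal R_m$. The set $\mathcal S_m$ is open in $X$ by Proposition~\ref{prop:weak}, since norm convergence implies weak convergence and the strict inequalities among $\lambda_1,\ldots,\lambda_{m+1}$ persist under small perturbations. On $\mathcal S_m$, the map $F_m$ is analytic (in particular $C^1$), with derivative given by \eqref{eq:HF}. The condition $d_m(q^*)>0$ states that $g_1(q^*),\ldots,g_m(q^*)$ are linearly independent, so $DF_m(q^*):X\to\R^m$ is surjective. Indeed, its adjoint $a\mapsto\sum a_kg_k$ is injective, and the range of $DF_m(q^*)$ is finite-dimensional, hence closed, and equal to the orthogonal complement of $\ker DF_m(q^*)^*=\{0\}$.

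Near $q^*$, the admissible set $\ISM_m(\Lambda)$ coincides with the level set $F_m^{-1}(\Lambda)\cap\mathcal S_m$. By the Lyusternik theorem, or equivalently the implicit function theorem applied along the finite-dimensional complement $\Span\{g_k\}$, this level set is a $C^1$ submanifold near $q^*$ with tangent space $\ker DF_m(q^*)$. To obtain the first-order condition, I take any $h\in\ker DF_m(q^*)$ and build a curve $\gamma(s)=q^*+sh+\sum_k b_k(s)g_k(q^*)$. The coefficients $b(s)\in\R^m$ are found from the implicit function theorem applied to $(s,b)\mapsto F_m(q^*+sh+\sum b_kg_k)-\Lambda$. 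Its $b$-Jacobian at $(0,0)$ is the Gram matrix $G(q^*)$, which is invertible, so $b$ is $C^1$ with $b(0)=0$ and $b'(0)=-G^{-1}DF_m(q^*)h=0$.

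Local minimality of $J$ along $\gamma$ gives $\frac{d}{ds}J(\gamma(s))|_{s=0}=\ip{q^*-q_0}{h}_X=0$. Hence $q^*-q_0\in(\ker DF_m(q^*))^\perp=\operatorname{Ran}DF_m(q^*)^*=\Span\{g_1(q^*),\ldots,g_m(q^*)\}$. Here the orthogonal complement of the kernel equals the range of the adjoint because that range is finite-dimensional and therefore closed. Writing $q^*-q_0=-\sum a_kg_k(q^*)$ yields \eqref{eq:finite}. Uniqueness of $a$ follows from the linear independence of the $g_k$. Equivalently, $a$ solves $G(q^*)a=-(\ip{q^*-q_0}{g_j})_j$, which has a unique solution since $G(q^*)$ is invertible. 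The result does not depend on the choice of signs of $\varphi_k$.

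The tree statement follows immediately from Theorem~\ref{thm:tree}, which gives $\mathcal R_m=\mathcal S_m$. The main point needing care is the smoothness claim. The paper asserts local analyticity of $\lambda_k$ on $\mathcal S_m$; I would justify it via Kato's analytic perturbation theory for the holomorphic family of type (B) $q\mapsto\mathfrak a_q$. This uses the relative form bound from the proof of Proposition~\ref{prop:weak}: multiplication by $h\in X$ is form-bounded with arbitrarily small relative bound. Together with simplicity of $\lambda_1,\ldots,\lambda_m$, this gives Fréchet differentiability with the derivative \eqref{eq:HF}.
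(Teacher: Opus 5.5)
Your proposal is correct and follows the same route as the paper. The implicit function theorem gives $q^*-q_0\perp\ker DF_m(q^*)$; you make this explicit through the curve $\gamma(s)$ with $b'(0)=0$. Hence $q^*-q_0\in\operatorname{ran}DF_m(q^*)^*=\Span\{g_1(q^*),\ldots,g_m(q^*)\}$, independence of the squares yields existence and uniqueness of the multipliers, and the tree case reduces to Theorem~\ref{thm:tree}. Your additional justification of the openness of $\mathcal S_m$ and of the analyticity of $F_m$ (via a type (B) holomorphic family) only supplies details that the paper takes for granted.
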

\begin{proof}
The implicit function theorem describes the admissible set
near $q^*$ as a submanifold with tangent space
$\ker DF_m(q^*)$. The minimum condition gives
$q^*-q_0\perp\ker DF_m(q^*)$, that is,
$q^*-q_0\in\operatorname{ran}DF_m(q^*)^*$.
Formula \eqref{eq:HF} and independence of the squares prove
existence and uniqueness of the multipliers.
\end{proof}

If $b_i=\ip{q^*-q_0}{g_i(q^*)}_X$, then $G(q^*)a=-b$.
The correction $q^*-q_0$ belongs to $\bigoplus_eH^2$
and is continuous on the graph; boundedness of $q^*$
itself follows only if $q_0$ is bounded.
For $q_0=0$, the formula represents the optimal potential
itself. Setting $u_k=\sqrt{|a_k|}\varphi_k(q^*)$ and
$\sigma_k=\operatorname{sgn}a_k$, we obtain the coupled system
\begin{equation}\label{eq:nonlinear}
 -u_k''+\left(q_0-\sum_j\sigma_ju_j^2\right)u_k
 =\lambda_k^*u_k,\qquad
 \ip{u_i}{u_j}_{\HH}=0\ (i\ne j),\quad
 \norm{u_k}_{\HH}^2=|a_k|,
\end{equation}
with the same vertex conditions. These are necessary
conditions: by themselves they establish neither
minimality nor the positions of the eigenvalues in the
ordered spectrum.
When the squares are dependent, the usual multiplier
rule may give only an abnormal condition
$\sum_ka_kg_k(q^*)=0$, which does not express the optimal
correction. A general limiting condition will be proved
in Section~\ref{sec:limit}.

\section{Linear dependence of squared eigenfunctions and generic linear independence}

\subsection{An example on a circle}

Consider a circle of length $4K(\kappa)$, $0<\kappa<1$,
with periodic conditions, where $K$ is the complete elliptic
integral of the first kind, and the Lam\'e potential
\[
 q(x)=2\kappa^2\sn^2(x,\kappa).
\]
The standard identities for Jacobi functions
\cite{KhareSukhatme} give eigenfunctions $\dn x$, $\cn x$,
and $\sn x$ at eigenvalues $\kappa^2$, $1$, and
$1+\kappa^2$, respectively. These are the first three
eigenvalues, each of which is simple. Indeed, the two
commuting reflections $x\mapsto-x$ and $x\mapsto2K-x$
decompose the periodic problem into a direct sum of four
problems on $[0,K]$ with conditions $NN,ND,DN,DD$.
The functions $\dn,\cn,\sn$ are positive in the interior
of this interval and are the ground-state eigenfunctions
of the $NN,ND,DN$ problems. Interlacing gives
\[
 \lambda_1^{DD}\geq\lambda_1^{DN},\qquad
 \lambda_2^{NN}\geq\lambda_1^{DN},\qquad
 \lambda_2^{ND},\lambda_2^{DN}\geq\lambda_1^{DD}.
\]
The first two inequalities are strict: the spectra of
$DD$ and $DN$, as well as those of $NN$ and $DN$, are
disjoint, since the common condition at one endpoint
makes the solutions proportional, while coincident
eigenvalues would require both $D$ and $N$ conditions
at the other endpoint. Hence all remaining eigenvalues
are greater than $1+\kappa^2$.

However, the identities $\cn^2+\sn^2=1$ and
$\dn^2+\kappa^2\sn^2=1$ imply
\begin{equation}\label{eq:lame}
 \dn^2x-\cn^2x-(1-\kappa^2)\sn^2x=0.
\end{equation}
If $I_d,I_c,I_s$ are the squared $L^2$-norms of these
functions on the circle, then, for the normalized
eigenfunctions, the dependence becomes
$I_d\varphi_1^2-I_c\varphi_2^2-(1-\kappa^2)I_s\varphi_3^2=0$.
The rank is two, since $\cn^2$ and $\sn^2$ are independent,
as follows from their values at $x=0,K$.
On a circle of any fixed length $L$, the same example
is obtained by replacing the potential with
$q_L(x)=2\gamma^2\kappa^2\sn^2(\gamma x,\kappa)$,
where $\gamma=4K(\kappa)/L$; all eigenvalues are multiplied
by $\gamma^2$. Thus even simple selected eigenvalues
on a graph with cycles may give a singular spectral map.

\begin{samepage}
\subsection{A local consequence of dependence}

\begin{lemma}\label{lem:rigidity}
Let $\ell>0$, $q\in L^2(0,\ell;\R)$, and let
$\mu_1,\ldots,\mu_n\in\R$, $n\geq1$, be pairwise distinct.
Let nonzero real functions $y_k\in H^2(0,\ell)$ satisfy
\[
 -y_k''+qy_k=\mu_ky_k
       \quad\text{almost everywhere on }(0,\ell),\qquad k=1,\ldots,n.
\]
If $y_1^2,\ldots,y_n^2$ are linearly dependent in
$L^2(0,\ell;\R)$, then, for some $\varepsilon>0$,
there exists an analytic function
\[
 \widetilde q:(-\varepsilon,\ell+\varepsilon)\to\R
\]
equal to $q$ almost everywhere on $(0,\ell)$.
No boundary conditions are imposed on the functions $y_k$.
\end{lemma}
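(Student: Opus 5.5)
The plan is to run the moment recursion from the proof of Theorem~\ref{thm:edge} without the boundary step. The missing boundary step no longer forces the integration constants to vanish. The first nonvanishing constant will then express $q$ as a quadratic polynomial in the $y_k$, and analyticity will follow from the analytic ODE theorem.

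\emph{Setup.} Let $\sum_k c_ky_k^2=0$ with not all $c_k$ zero. Define $S_r,T_r$ as in \eqref{eq:edge-moments}. Step~1 of the proof of Theorem~\ref{thm:edge} used no boundary conditions, so $S_r\in H^2$, $T_r\in H^1$, and the identities \eqref{eq:ST} hold. Step~3 also used no boundary conditions. It gives: if $S_r\equiv0$, then $S_{r+1}=T_r\equiv C_r$ for a constant $C_r$.

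\emph{Case all constants vanish.} Suppose $C_0=\cdots=C_{n-2}=0$. Then by induction $S_0=\cdots=S_{n-1}\equiv0$. The Vandermonde argument of step~5 then gives $c_ky_k^2\equiv0$ for every $k$. Since $y_k\not\equiv0$, all $c_k=0$, a contradiction.

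\emph{Case some constant is nonzero.} Let $r$ be the first index with $C_r\ne0$, so $S_r\equiv0$ and $S_{r+1}\equiv C_r$. Apply \eqref{eq:ST} at index $r+1$, using $S_{r+1}'=S_{r+1}''=0$:
\[
 0=2T_{r+1}+2qC_r-2S_{r+2},\qquad T_{r+1}'=-S_{r+2}'.
\]
The first relation gives $qC_r=S_{r+2}-T_{r+1}\in H^1$. Hence $q$ has an $H^1$ representative and $(qC_r)'=S_{r+2}'-T_{r+1}'=2S_{r+2}'$ weakly. Integrating,
\[
 q=\frac{2}{C_r}\Bigl(\sum_{k=1}^n c_k\mu_k^{r+2}y_k^2-d\Bigr)
 \quad\text{a.e. on }(0,\ell)
\]
for some constant $d$. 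This is the key point: $q$ equals a polynomial $P(y_1,\ldots,y_n)$ almost everywhere.

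\emph{Analyticity.} Substituting into the eigenvalue equations gives the autonomous polynomial first-order system
\[
 y_k'=z_k,\qquad z_k'=\bigl(P(y)-\mu_k\bigr)y_k,\qquad k=1,\ldots,n,
\]
satisfied by $(y,z)\in C^1[0,\ell]$. The right-hand side is polynomial, hence locally Lipschitz. By uniqueness for the Cauchy problem, $(y,z)$ coincides on $[0,\ell]$ with the solution of this system issued from $(y(x_0),z(x_0))$ at any point $x_0$.

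By the Cauchy theorem on analytic ODEs, that solution is real analytic on its maximal interval of existence. The maximal interval is open and contains the compact set $[0,\ell]$, so it contains $(-\varepsilon,\ell+\varepsilon)$ for some $\varepsilon>0$.

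Set $\widetilde q=P(y)$ on this interval. It is analytic as a polynomial in analytic functions, and it equals $q$ almost everywhere on $(0,\ell)$.

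\emph{Main obstacle.} The delicate step is the case with a nonzero constant. One must justify that $q$ itself, a priori only in $L^2$, can be differentiated as used. The way to handle this is to first read off $qC_r=S_{r+2}-T_{r+1}$, an $H^1$ function, before differentiating. The extension past the endpoints is then immediate from openness of the maximal existence interval.
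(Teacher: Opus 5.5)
Your proposal is correct and follows the paper's proof essentially step for step: your first index $r$ with $C_r\neq 0$ is the paper's $p-1$, your formula $q=\frac{2}{C_r}\bigl(S_{r+2}-d\bigr)$ is exactly \eqref{eq:rigidity-potential}, and your analytic-ODE argument is the same as the paper's. The only cosmetic differences are these: you get the neighbourhood of $[0,\ell]$ from openness of the maximal existence interval rather than from separate Cauchy problems at the two endpoints, and you differentiate the $H^1$ function $qC_r$, whereas the paper integrates $T_p+S_{p+1}$ to a constant and eliminates $T_p$ algebraically without ever differentiating $q$.
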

\end{samepage}
\begin{proof}
\emph{1.}
Choose real coefficients of dependence $c_k$, not all
zero, such that
\[
 \sum_{k=1}^n c_ky_k^2=0\quad\text{in }L^2(0,\ell).
\]
Discard the functions with zero coefficients and renumber
the remaining functions, retaining $n$ for their number.
Now $c_k\ne0$ for every $k$, and $n\geq2$: if $n=1$,
the equality $c_1y_1^2=0$ would contradict
$y_1\not\equiv0$.
As shown in the first step of the proof of
Theorem~\ref{thm:edge}, $y_k^2\in H^2(0,\ell)$ and
$(y_k')^2\in H^1(0,\ell)$. The dependence therefore holds
at every point of $[0,\ell]$.

Define $S_r$ and $T_r$ by \eqref{eq:edge-moments}.
Then $S_0\equiv0$, $S_r\in H^2(0,\ell)$,
$T_r\in H^1(0,\ell)$, and the identities
\eqref{eq:ST} hold. Their derivation uses only the
eigenvalue equations and the rules for weak derivatives,
so no boundary conditions are needed.
In all subsequent pointwise equalities for $S_r$ and
$T_r$, we use their continuous representatives.

\emph{2.}
First observe that, for every $r\geq0$, $S_r\equiv0$
implies that $S_{r+1}$ is constant.
Indeed, by \eqref{eq:ST},
\[
 T_r=S_{r+1},\qquad T_r'=-S_{r+1}'.
\]
Differentiating the first equality weakly and comparing
it with the second yields $2S_{r+1}'=0$.
Absolute continuity of $S_{r+1}$ means that it is
constant on $[0,\ell]$. The value of this constant
has not yet been determined.

The functions $S_1,\ldots,S_{n-1}$ cannot all vanish
identically. Otherwise, together with $S_0\equiv0$,
they would give, at each $x\in[0,\ell]$, the system
\[
 \sum_{k=1}^n\mu_k^r c_ky_k(x)^2=0,
                   \qquad r=0,\ldots,n-1.
\]
Its Vandermonde matrix is invertible, since the $\mu_k$
are pairwise distinct. Hence $c_ky_k(x)^2=0$ for all
$x$ and $k$. As $c_k\ne0$, this would imply
$y_k\equiv0$, contrary to the assumption.

Let $p\in\{1,\ldots,n-1\}$ be the smallest index such
that $S_p\not\equiv0$. Then $S_{p-1}\equiv0$, and the
constancy proved above gives
\begin{equation}\label{eq:rigidity-moment}
 S_p(x)=C\ne0\qquad\text{for all }x\in[0,\ell].
\end{equation}

\emph{3.}
Substituting $S_p=C$ and $S_p'=S_p''=0$ into
\eqref{eq:ST} yields, almost everywhere,
\begin{equation}\label{eq:rigidity-identities}
 T_p=S_{p+1}-Cq,\qquad T_p'=-S_{p+1}'.
\end{equation}
By the second equality, the function
$T_p+S_{p+1}\in H^1(0,\ell)$ has zero weak derivative.
Thus there is a constant $A\in\R$ such that
$T_p+S_{p+1}=A$ on all of $[0,\ell]$.
Eliminating $T_p$ from this equality and the first
identity in \eqref{eq:rigidity-identities}, we obtain
\begin{equation}\label{eq:rigidity-potential}
 \begin{aligned}
 q&=q_c+\frac{2}{C}S_{p+1}
     =q_c+\sum_{k=1}^n b_ky_k^2
                 &&\text{almost everywhere on }(0,\ell),\\
 q_c&=-\frac{A}{C},\qquad
 b_k=\frac{2c_k\mu_k^{p+1}}{C}.
 \end{aligned}
\end{equation}
This formula is obtained without differentiating $q$
or making any a priori smoothness assumptions on it.
Its right-hand side belongs to $H^2(0,\ell)$;
denote its continuous representative by
$Q\in C^1[0,\ell]$. Then $Q=q$ almost everywhere.

\emph{4.}
Replacing $q$ with $Q$ in the original equations gives
\begin{equation}\label{eq:rigidity-system}
 y_k''=\left(q_c+\sum_{j=1}^n b_jy_j^2-\mu_k\right)y_k,
                      \qquad k=1,\ldots,n.
\end{equation}
Initially, these equalities hold almost everywhere.
Their right-hand sides are continuous on $[0,\ell]$,
since $y_j\in C^1[0,\ell]$. Absolute continuity of
$y_k'$ gives the integral identity
\[
 y_k'(x)=y_k'(0)+
   \int_0^x\left(q_c+\sum_{j=1}^n b_jy_j(t)^2-\mu_k\right)y_k(t)\,dt.
\]
Consequently, $y_k\in C^2[0,\ell]$, and
\eqref{eq:rigidity-system} is a classical system.

Set
\[
 Z=(y_1,\ldots,y_n,v_1,\ldots,v_n),\qquad v_k=y_k'.
\]
Write $y=(y_1,\ldots,y_n)$ and $v=(v_1,\ldots,v_n)$.
Then $Z\in C^1[0,\ell]$ satisfies the first-order system
\begin{equation}\label{eq:rigidity-first-order}
 \begin{aligned}
 Z'&=\mathcal P(Z),\\
 \mathcal P(y,v)&=\left(v,\;
   \left(\left(q_c+\sum_{j=1}^n b_jy_j^2-\mu_k\right)y_k
                  \right)_{k=1}^n\right).
 \end{aligned}
\end{equation}
All components of $\mathcal P$ are polynomials with
real coefficients, so $\mathcal P$ is analytic and
locally Lipschitz on $\R^{2n}$.

By the theorem on analyticity of solutions of ordinary
differential equations with analytic right-hand sides,
a local analytic solution passes through every finite
initial point. Uniqueness for the Cauchy problem
implies that this solution agrees with $Z$ wherever
both are defined. Applying this at every interior
point of the interval proves analyticity of $Z$
on $(0,\ell)$.

\emph{5.}
The vectors $Z(0)$ and $Z(\ell)$ are finite, because
$y_k\in H^2(0,\ell)\hookrightarrow C^1[0,\ell]$.
Solve the analytic Cauchy problems
\[
 Z_- '=\mathcal P(Z_-),\quad Z_-(0)=Z(0),\qquad
 Z_+ '=\mathcal P(Z_+),\quad Z_+(\ell)=Z(\ell).
\]
There are $\delta_->0$ and $\delta_+>0$ such that
these solutions are analytic on
$(-\delta_-,\delta_-)$ and
$(\ell-\delta_+,\ell+\delta_+)$, respectively.
By decreasing these numbers, we may assume that
$\delta_-,\delta_+<\ell/2$.
Uniqueness gives $Z_-=Z$ on $[0,\delta_-)$ and
$Z_+=Z$ on $(\ell-\delta_+,\ell]$.
The three solutions therefore agree on their
overlaps and define a single analytic extension
$\widetilde Z$ to $(-\delta_-,\ell+\delta_+)$.

Let $\varepsilon=\min\{\delta_-,\delta_+\}$, and let
$\widetilde y_k$ be the first $n$ components of
$\widetilde Z$. The function
\[
 \widetilde q(x)=q_c+\sum_{k=1}^n b_k\widetilde y_k(x)^2,
                 \qquad x\in(-\varepsilon,\ell+\varepsilon),
\]
is real analytic and, by \eqref{eq:rigidity-potential},
agrees with $q$ almost everywhere on $(0,\ell)$.
This is the required representative of the potential,
analytic in a neighborhood of the entire closed interval.
\end{proof}

In particular, if $q_e\notin H^2(0,\ell_e)$ on every
edge, then the squares of any finite collection of
eigenfunctions corresponding to distinct eigenvalues
are independent. Indeed, on each edge the lemma
excludes dependence among nonzero restrictions, and
every nonzero eigenfunction is nonzero on at least
one edge. Absence of an analytic representative of
the potential on every closed edge also suffices.

\subsection{Generic independence of squared eigenfunctions}

Generic independence of squared eigenfunctions was previously
studied, for example, under variation of the domain for the
Laplacian \cite{PrivatSigalotti}, while generic simplicity of
the spectrum of a graph was studied under variation of its
edge lengths \cite{BerkolaikoLiu}. Here the geometry and vertex
conditions are fixed; only the potential varies.

\begin{theorem}\label{thm:generic}
For any fixed graph and every $m$, the sets $\mathcal S_m$
and $\mathcal R_m$ are open and dense in $X$.
The set $\bigcap_{m\geq1}\mathcal R_m$ is a dense $G_\delta$:
for its potentials, the entire spectrum is simple and every
finite collection of squared eigenfunctions is independent.
The same assertions hold in
$Y=\prod_eC^\infty([0,\ell_e];\R)$ with the Fr\'echet topology.
\end{theorem}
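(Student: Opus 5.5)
Openness comes first. On $\mathcal S_m$ the first $m+1$ eigenvalues are continuous in $q\in X$ by Proposition~\ref{prop:weak}, and norm convergence implies weak convergence, so the strict inequalities $\lambda_1<\cdots<\lambda_{m+1}$ persist near any point of $\mathcal S_m$. Near such a point the normalized eigenfunctions $\varphi_k(q)$ depend analytically (in particular continuously) on $q$ with values in $\DD_0\subset\bigoplus_eH^2\subset C(\Gamma)$. Hence $g_k(q)=\varphi_k(q)^2$ is continuous into $X$, and so is $d_m(q)=\det G(q)$. Therefore $\mathcal R_m=\{d_m>0\}\cap\mathcal S_m$ is open. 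In $Y$ the inclusion $Y\hookrightarrow X$ is continuous, so the intersections with $Y$ are open as well.

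For density, the key observation is that $\mathcal R_m\subset\mathcal S_m$. It therefore suffices to show that $\mathcal R_m$ is dense, and I would proceed in two steps.

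\emph{Step (a): density of $\mathcal S_m$.} Fix $q$ and a neighbourhood of it. First note that $\mathcal S_m$ is contained in $\mathcal S_{m'}$ for $m'\le m$, so we may work with a fixed $m$. I would use analytic perturbation theory along a one-parameter family $q+th$. Take a cluster of eigenvalues among the first $m+1$ that coincide at $q$. Its total projection is analytic, and the restriction of $\int h\,u\bar w$ to the eigenspace gives the first-order splitting. I need an $h$ for which this quadratic form on the eigenspace $E$ is not a multiple of the identity. If $\int h\,(|u|^2-|w|^2)=0$ for all $h$ and all unit $u,w\in E$, then $|u|^2$ would be the same function for all unit $u\in E$. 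Applying this to $u$, $w$ and $(u\pm w)/\sqrt2$ gives $\operatorname{Re}(u\bar w)=0$ for orthogonal $u,w$. Taking $w$ and $iw$ gives $u\bar w\equiv0$, which contradicts unique continuation on an edge where both are nonzero (they cannot have disjoint supports edgewise). So $\dim E\ge2$ is broken by a suitable $h$, and this $h$ may be chosen smooth by density. Repeating finitely many times separates all of $\lambda_1,\dots,\lambda_{m+1}$ with an arbitrarily small total perturbation. By openness the separation already achieved is not destroyed at later stages.

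\emph{Step (b): density of $\mathcal R_m$ in $\mathcal S_m$.} Here I would use analyticity. The set $\mathcal S_m$ is open, and on a connected neighbourhood of $q\in\mathcal S_m$ the map $q\mapsto d_m(q)$ is real analytic. Along any analytic line $t\mapsto q+th$, the function $d_m$ is then either identically zero or vanishes only at isolated $t$. It is therefore enough to find one nearby point $q_1$ in the same connected piece with $d_m(q_1)>0$. To produce it, I would modify the potential on a single edge $e$ on which every $\varphi_k$, $k\le m$, is nonzero (or handle each $k$ separately, see below), making $q_1|_e$ have no analytic representative while remaining within the analytic neighbourhood. Lemma~\ref{lem:rigidity} then forbids any dependence among the nonzero restrictions. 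The obstacle is that a small change of $q$ in the $L^2$ topology is harmless, but Lemma~\ref{lem:rigidity} needs $q$ non-analytic on every edge where some dependent eigenfunction lives. My choice is therefore $q_1=q+\epsilon h$, where $h\in C^\infty$ is non-analytic (a bump with flat zeros) on every edge. Then $q_1$ is non-analytic on every edge for all but countably many $\epsilon$, because the analytic perturbations $q+\epsilon h$ are analytic on an edge for at most one value of $\epsilon$ unless $h|_e$ is analytic, which it is not. The remark after Lemma~\ref{lem:rigidity} then gives $d_m(q_1)>0$ for small such $\epsilon$. Since $h$ is smooth, the same argument works in $Y$. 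Having $d_m\not\equiv0$ on the line also yields density via isolated zeros.

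Finally, $\bigcap_m\mathcal R_m$ is a countable intersection of open dense sets in the complete metric spaces $X$ and $Y$, so by Baire's theorem it is a dense $G_\delta$. For its elements every eigenvalue is simple (it belongs to every $\mathcal S_m$). Every finite collection of squares is a subcollection of some $\{g_1,\dots,g_m\}$, which is independent, so every finite collection is independent. The main obstacle I expect is the splitting argument in step (a) for degenerate eigenvalues on graphs with cycles: one must make sure that the first-order perturbation actually separates the cluster, including ruling out a situation where all $|u|^2$, $u\in E$, coincide. I would handle this with the unique-continuation argument sketched above.
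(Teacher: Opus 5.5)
Your proof is correct, and both density steps take a different route from the paper's.

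For simplicity of the spectrum, the paper uses point values of a real orthonormal basis of the eigenspace to build a direction $h$ for which $M(h)$ has \emph{simple} spectrum. It then uses discriminant polynomials to get one $h$ that splits all relevant clusters at once. You only need $M(h)$ to be non-scalar, which your polarization argument gives; in fact $|u|^2=|w|^2$ together with $u\bar w\equiv0$ already forces $|u|^4=0$, so no unique continuation is needed. You then iterate. Finitely many steps suffice, for instance because the number of distinct values among $\lambda_1,\ldots,\lambda_B$ increases at each step, where $B\geq m+1$ is fixed at the start with $\lambda_B(q)<\lambda_{B+1}(q)$. Your version is more elementary. The paper's gives a single line $q+sh$ along which every cluster splits, and it reuses exactly this in the proof of Theorem~\ref{thm:ae}.

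For independence, the paper uses the residual set $\mathcal A$ of potentials with $q_e\notin H^2$ on every edge. Since $\mathcal A\cap Y=\varnothing$, it needs a separate argument in $Y$, based on analyticity of $t\mapsto d_m(q+t(q_1-q))$ and isolated zeros. Your fixed edgewise smooth $h$, which is not real analytic on any edge (a flat bump), avoids both steps. The potential $q+\epsilon h$ can have an analytic representative on a given edge for at most one $\epsilon$. So for all but finitely many small $\epsilon$, Lemma~\ref{lem:rigidity} applies on every edge and $q+\epsilon h\in\mathcal R_m$. This treats $X$ and $Y$ uniformly, and it makes your preliminary remark about analyticity of $d_m$ along lines superfluous. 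It is the smooth analogue of the ``at most one $t$'' trick with $v_e=\mathbf 1_{(\ell_e/2,\ell_e)}$ that the paper uses in Theorem~\ref{thm:ae}. The paper's route, in turn, identifies an explicit residual class, namely $\mathcal A\cap\mathcal S_m\subset\mathcal R_m$.
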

\begin{proof}
\emph{Simplicity.}
For an eigenspace of multiplicity $r$, choose a real
orthonormal basis $\psi_1,\ldots,\psi_r$.
Analytic perturbation theory \cite{Kato} reduces the spectral
cluster to a symmetric matrix $A(q)$, with, at the
eigenvalue $\lambda$,
\begin{equation}\label{eq:cluster}
 A(q)=\lambda I,
 \qquad DA(q)h=M(h),\quad
 M(h)_{ij}=\int_\Gamma h\psi_i\psi_j.
\end{equation}
The vectors $v(x)=(\psi_1(x),\ldots,\psi_r(x))$ at
interior points of the edges span $\R^r$: a coefficient
vector orthogonal to all of them would give an
identically zero linear combination of the eigenfunctions
and would therefore be zero.
Choose $r$ independent vectors $v(x_j)$.
For suitable positive weights, the matrix
$\sum_jw_jv(x_j)v(x_j)^T$ has a simple positive spectrum.
This follows inductively: each sufficiently small new
summand increases the rank by one, creates one small
positive eigenvalue, and preserves the gaps between
the preceding positive eigenvalues.
Approximating the point masses by smooth functions
with small supports gives $h\in Y$ such that $M(h)$
has simple spectrum.

For finitely many clusters, we can choose a single
such $h$. Indeed, in the finite-dimensional span of
all the chosen perturbations, the discriminant of
each $M(h)$ is a nonzero polynomial; the product of
these polynomials is also not identically zero.
Then $A(q+th)=\lambda I+tM(h)+O(t^2)$ splits each
cluster for every sufficiently small $t\ne0$.
Take all clusters up to the last one containing
$\lambda_m(q)$, preserving the gap to the next cluster.
This proves density of $\mathcal S_m$ in $X$ and in
$Y$; openness follows from continuity of the eigenvalues.

\emph{Independence in $X$.}
Closed bounded balls of $H^2$ are compact and have
empty interior in $L^2$. Hence $H^2(0,\ell_e)$
is a countable union of closed nowhere dense subsets
of $L^2(0,\ell_e)$.
The set
\[
 \mathcal A=\{q\in X:q_e\notin H^2(0,\ell_e)
                              \text{ for every }e\}
\]
is a dense $G_\delta$ in $X$.
By Lemma~\ref{lem:rigidity},
$\mathcal A\cap\mathcal S_m\subset\mathcal R_m$,
which proves density of $\mathcal R_m$.
Its openness follows from continuity of the squares
in $X$ and of the Gram determinant.

\emph{Independence in $Y$.}
Let $q\in\mathcal S_m\cap Y$.
Take a small convex $X$-ball about $q$ contained in
$\mathcal S_m$. By openness and density of
$\mathcal R_m$ in $X$ and density of $Y$ in $X$,
this ball contains a $q_1\in\mathcal R_m\cap Y$.
The function $t\mapsto d_m(q+t(q_1-q))$ is analytic
in a neighborhood of $[0,1]$ and nonzero at $t=1$.
Its zeros are isolated; regular points therefore
occur for arbitrarily small $t$, and
$t(q_1-q)\to0$ in the Fr\'echet topology.
This proves density of $\mathcal R_m\cap Y$;
openness follows from the continuous embedding
$Y\hookrightarrow X$.
The final assertions follow from Baire's theorem.
\end{proof}

The following formulation gives a concrete
finite-dimensional meaning to almost every potential.

\begin{theorem}\label{thm:ae}
For every $\bar q\in X$ and $m$, there exist
$h\in Y$, $v\in X$, and $\varepsilon>0$ such that
\[
 \bar q+sh+tv\in\mathcal R_m
 \quad\text{for almost every }(s,t)\in(-\varepsilon,\varepsilon)^2
\]
with respect to two-dimensional Lebesgue measure.
\end{theorem}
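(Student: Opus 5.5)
Choose $h$ and $v$ so that one direction gives simplicity and the other gives independence; analyticity of $d_m$ on the fixed real line through a good point then handles the rest. Since the theorem asks only for almost every $(s,t)$, Fubini will reduce the claim to one-dimensional analytic slices.

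\emph{Choice of $h$ and $v$.} By Theorem~\ref{thm:generic}, $\mathcal S_m\cap Y$-directions split clusters. Fix $\bar q$. Following the simplicity part of the proof of Theorem~\ref{thm:generic}, choose $h\in Y$ such that $M(h)$ has simple spectrum on every cluster of $L(\bar q)$ up to and including the one containing $\lambda_m(\bar q)$. Then $\bar q+sh\in\mathcal S_m$ for all small $s\neq0$. Next we need $v$. Since $\mathcal S_m$ is open, $\bar q+s_0h$ is in $\mathcal S_m$ together with a ball around it for a fixed small $s_0\ne 0$. This alone is not uniform in $s$, so we argue instead that the set
\[
 \mathcal U=\{(s,t): \bar q+sh+tv\in\mathcal S_m\}
\]
has full measure near $0$. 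For each fixed $t$, the family $s\mapsto \bar q+sh+tv$ is a real-analytic (indeed affine) path. The eigenvalue clusters above $\lambda_{m+1}$ stay separated for $|s|,|t|<\varepsilon$ by continuity, so the total spectral projection onto the first cluster group is analytic in $(s,t)$. Coincidence of eigenvalues inside it is the zero set of a discriminant-type function. That function is real-analytic in $s$ on each line, and it is not identically zero on the line $t=0$. Shrinking $\varepsilon$ and using joint real-analyticity in $(s,t)$ of the discriminant of the finite reduced matrix, its zero set is a proper analytic subset of the square, hence Lebesgue null.

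\emph{Independence.} Take $v\in X$ with $\bar q+s_0h+v'\in\mathcal R_m$ for some small $v'$, using density of $\mathcal R_m$. More cleanly, pick $v=q_1-\bar q$ with $q_1\in\mathcal R_m$ so close to $\bar q+s_0 h$ that the point $(s_0,1)$, after rescaling to $(s_0,t_0)$ with $t_0$ small, lies in the good region. On the connected open set $W\subset(-\varepsilon,\varepsilon)^2$ where the $m$ first eigenvalues are simple and separated from $\lambda_{m+1}$, the Gram determinant $d_m(\bar q+sh+tv)$ is real-analytic in $(s,t)$. Indeed, the symmetric functions of the squared eigenfunctions are analytic even through crossings of the group, provided one uses $\sum$-invariant Gram quantities. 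I would rather handle this by working with the analytic projection and the determinant of $(\operatorname{tr}(P_i h P_j\cdot))$. Since $d_m\neq0$ at $(s_0,t_0)$, the zero set of this nontrivial real-analytic function in two variables has measure zero. Together with the null set from simplicity, this gives the claim.

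\emph{Main obstacle.} The hardest step is defining a single jointly analytic function on the whole square whose nonvanishing characterizes $\mathcal R_m$, despite eigenvalue crossings among the first $m$. I would try the quantity $\Delta(s,t)=\operatorname{disc}\cdot d_m$, expressed through symmetric functions of the cluster's reduced matrix: a Gram determinant of $\varphi_k^2$ times the square of the Vandermonde of eigenvalues, which is symmetric in the labels and hence analytic by Kato's theory. I would then show that $\Delta$ is nonzero at one point, namely a point of $\mathcal R_m$ near $\bar q$ reachable in the plane, and conclude by the fact that a real-analytic function on a connected domain that is not identically zero has a null zero set. If connectivity fails, I would shrink $\varepsilon$ so that the square lies inside one analytic chart of the spectral projection.
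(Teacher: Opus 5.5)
Your simplicity step is the paper's: the discriminants of the analytic reduced cluster matrices are nonzero on the line $t=0$, $s\neq0$, so their zero set is null. The gap is in the independence step, which is exactly the point you call the main obstacle and leave unresolved.

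On $W$ the Gram determinant $d_m$ is only locally analytic, and nothing makes $W$ connected. We have $(0,0)\notin W$ whenever $\bar q\notin\mathcal S_m$. The crossing set can also be a curve through the origin, for example if the reduced matrices are simultaneously diagonal and two diagonal entries cross. Then $d_m(s_0,t_0)\neq0$ says nothing about the other components, and shrinking $\varepsilon$ does not help.

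Your remedy $\Delta$ is justified only by ``symmetric in the labels, hence analytic by Kato's theory'', and that is not a valid principle. The function $d_m$ itself is symmetric in the labels. Yet at a conical crossing the limits of the $\varphi_k^2$ depend on the direction of approach, so $d_m$ need not even be continuous.

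What would make your route work is the following identity. For a cluster of size $r$ with total projection $P$, and $k=0,\ldots,r-1$, the sum $\sum_i\lambda_i^k\varphi_i^2$ equals the diagonal $K_k(x,x)$ of the integral kernel of $\Lq(q)^kP$. These diagonals are analytic in $(s,t)$ with values in $X$. Their Gram determinant equals $\prod_{i<j}(\lambda_i-\lambda_j)^2$ times the Gram determinant of the $\varphi_i^2$.

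This identity requires whole clusters. So you must use $d_{m'}$, where $m'$ is the last index of the cluster containing $\lambda_m(\bar q)$. If $\lambda_m(\bar q)=\lambda_{m+1}(\bar q)$, then $\operatorname{disc}\cdot d_m$ is not symmetric over that cluster and the mechanism breaks down. You also need a point of $\mathcal R_{m'}$ at small parameters. Your $v=q_1-\bar q$ does not give one. The choice $v=(q_1-\bar q-s_0h)/t_0$ with $\norm{q_1-\bar q-s_0h}_X\le|t_0|$ does, provided $\varepsilon$ is fixed beforehand from the bound $\norm{v}_X\le1$.

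The paper avoids all of this. It takes $v_e=\mathbf 1_{(\ell_e/2,\ell_e)}\notin H^2(0,\ell_e)$. For fixed $s$, at most one $t$ makes $(\bar q+sh+tv)_e\in H^2$, since two such values would force $v_e\in H^2$. Because $H^2$ is an $F_\sigma$ subset of $L^2$, Fubini's theorem gives that almost every $(s,t)$ yields a potential in $\mathcal A$. It concludes with Lemma~\ref{lem:rigidity}, which gives $\mathcal A\cap\mathcal S_m\subset\mathcal R_m$. Independence thus comes from the non-smoothness of the potential, and no analytic continuation of a Gram determinant across eigenvalue crossings is needed.
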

\begin{proof}
On each edge, set
$v_e=\mathbf1_{(\ell_e/2,\ell_e)}\notin H^2(0,\ell_e)$,
and choose $h$ as in the simplicity proof, for all
clusters up to the last one containing
$\lambda_m(\bar q)$. In a small square, these
clusters remain isolated.
The discriminants of their analytic matrices for
$\bar q+sh+tv$ are not identically zero, since on
the line $t=0$ the clusters split for small $s\ne0$.
The zero set of a nonzero real analytic function
of two variables has measure zero.
For example, choosing a nonzero coefficient
function in the expansion in $s$ gives only a
discrete exceptional set of $t$; for all other $t$,
the zeros in $s$ are discrete, and Fubini's theorem
applies. Hence almost every parameter pair gives
a potential in $\mathcal S_m$.

For each fixed $s$, the condition
$\bar q_e+sh_e+tv_e\in H^2(0,\ell_e)$ can hold
for at most one $t$: two such values would imply
$v_e\in H^2$.
The number of edges is finite, and the exceptional
sets are measurable, since $H^2$ is an $F_\sigma$
subset of $L^2$. Again by Fubini's theorem,
almost every parameter pair gives a potential
in $\mathcal A$.
Lemma~\ref{lem:rigidity} completes the proof.
\end{proof}

\section{A limiting representation at a nonregular point of the isospectral set}
\label{sec:limit}

Dependence of squared eigenfunctions does not prevent
the following representation.
The eigenfunctions in it belong to approximating
potentials; replacing them with eigenfunctions of
$q^*$ itself requires separate justification.
This approach is related to asymptotic multiplier
conditions in infinite-dimensional optimization
\cite{BKMW}.

\begin{theorem}\label{thm:limit}
Let $q^*\in\ISM_m(\Lambda)\cap\mathcal S_m$ be a
local minimizer of \eqref{eq:problem}.
Choose $\rho_0>0$ such that $J(q)\geq J(q^*)$
for all admissible $q$ with
$\norm{q-q^*}_X<\rho_0$, and fix $0<\rho<\rho_0$.
For every $n\geq1$, a minimum exists:
\begin{equation}\label{eq:penalty}
 q_n\in\argmin_{\norm{q-q^*}_X\leq\rho}
 \left\{J(q)+\tfrac12\norm{q-q^*}_X^2
                 +\tfrac n2\norm{F_m(q)-\Lambda}_{\R^m}^2\right\}.
\end{equation}
Every choice of these minima satisfies
\begin{equation}\label{eq:stronglimit}
 q_n\to q^*\text{ in }X,\qquad
 J(q_n)\to J(q^*),\qquad
 n\norm{F_m(q_n)-\Lambda}_{\R^m}^2\to0.
\end{equation}
For all sufficiently large $n$, we have
$q_n\in\mathcal S_m$.
For these $n$, let $\varphi_{k,n}$ be real normalized
eigenfunctions of $\Lq(q_n)$, and set
$a_{k,n}=n(\lambda_k(q_n)-\lambda_k^*)$.
Then, for all sufficiently large $n$,
\begin{equation}\label{eq:stationary}
 \sum_{k=1}^m a_{k,n}\varphi_{k,n}^2=q_0+q^*-2q_n
 \quad\text{in }X,
\end{equation}
and
\begin{equation}\label{eq:limitrepresentation}
 \boxed{\displaystyle
 q^*=q_0-\lim_{n\to\infty}
                 \sum_{k=1}^m a_{k,n}\varphi_{k,n}^2,
 \qquad\text{strong limit in }L^2(\Gamma;\R).}
\end{equation}
For all sufficiently large $n$, the representation
has the exact error formula
\begin{equation}\label{eq:error}
 \left\|q^*-\left(q_0-\sum_{k=1}^m
                         a_{k,n}\varphi_{k,n}^2\right)\right\|_X
       =2\norm{q_n-q^*}_X,
 \qquad \norm{a_n}_{\R^m}=o(\sqrt n).
\end{equation}
\end{theorem}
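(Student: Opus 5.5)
The plan follows the standard penalty argument, with the compactness supplied by Proposition~\ref{prop:weak}. Write $\Phi_n(q)=J(q)+\tfrac12\norm{q-q^*}_X^2+\tfrac n2\norm{F_m(q)-\Lambda}^2$ on the closed ball $\bar B=\{\norm{q-q^*}_X\le\rho\}$.

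\emph{Existence of $q_n$.} I will use the direct method on $\bar B$. Take a minimizing sequence in $\bar B$. It is bounded, so a subsequence converges weakly, and $\bar B$ is weakly closed. By Proposition~\ref{prop:weak}, $F_m$ is weakly sequentially continuous, hence the penalty term converges. The two quadratic terms are weakly lower semicontinuous, exactly as in step~3 of Proposition~\ref{prop:existence}. Therefore the weak limit is a minimizer.

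\emph{Strong convergence.} Since $q^*$ is admissible, $\Phi_n(q_n)\le\Phi_n(q^*)=J(q^*)$. This bounds $n\norm{F_m(q_n)-\Lambda}^2\le 2J(q^*)$, so $F_m(q_n)\to\Lambda$. Take any weakly convergent subsequence $q_n\rightharpoonup\bar q\in\bar B$. Proposition~\ref{prop:weak} gives $F_m(\bar q)=\Lambda$, so $\bar q$ is admissible. Since $\norm{\bar q-q^*}\le\rho<\rho_0$, local minimality gives $J(\bar q)\ge J(q^*)$. Lower semicontinuity then yields
\[
 J(\bar q)+\tfrac12\norm{\bar q-q^*}^2\le\liminf\bigl(J(q_n)+\tfrac12\norm{q_n-q^*}^2\bigr)\le J(q^*)\le J(\bar q).
\]
Hence $\bar q=q^*$. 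Along the subsequence, $\limsup\bigl(J(q_n)+\tfrac12\norm{q_n-q^*}^2\bigr)\le J(q^*)$ and $\liminf J(q_n)\ge J(q^*)$, so $\norm{q_n-q^*}\to0$. Then $J(q_n)\to J(q^*)$ by continuity, and from $\Phi_n(q_n)\le J(q^*)$ we get $\limsup n\norm{F_m(q_n)-\Lambda}^2\le0$. The subsequence principle upgrades these to the whole sequence. Since $\mathcal S_m$ is open (Theorem~\ref{thm:generic}, or continuity of eigenvalues), $q_n\in\mathcal S_m$ for large $n$. Moreover $q_n$ eventually lies in the open ball $\norm{q-q^*}<\rho$, so the constraint is inactive.

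\emph{Stationarity.} For large $n$, $q_n$ is an interior minimizer, and on $\mathcal S_m$ the map $F_m$ is differentiable by \eqref{eq:HF}. The Fréchet derivative of $\Phi_n$ at $q_n$ must therefore vanish:
\[
 (q_n-q_0)+(q_n-q^*)+\sum_k n(\lambda_k(q_n)-\lambda_k^*)\,\varphi_{k,n}^2=0.
\]
This is exactly \eqref{eq:stationary}.

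\emph{Representation and error formula.} Rearranging \eqref{eq:stationary} gives
\[
 q^*-\Bigl(q_0-\sum_k a_{k,n}\varphi_{k,n}^2\Bigr)=2(q^*-q_n).
\]
Taking norms gives the error identity in \eqref{eq:error}, and \eqref{eq:limitrepresentation} follows from $q_n\to q^*$. Finally, $\norm{a_n}^2=n^2\norm{F_m(q_n)-\Lambda}^2=n\cdot o(1)$, so $\norm{a_n}=o(\sqrt n)$.

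The only genuinely delicate point is upgrading weak to strong convergence. The extra proximal term $\tfrac12\norm{q-q^*}^2$, combined with the local minimality inequality at the weak limit, is what makes that work.
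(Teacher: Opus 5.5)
Your proposal is correct and follows the paper's proof essentially step for step: the direct method on the closed ball via Proposition~\ref{prop:weak}, comparison with $q^*$, local minimality at a weak limit to obtain strong convergence, and then the interior stationarity condition combined with \eqref{eq:HF}. The only difference is cosmetic: you use the proximal term to identify every weak subsequential limit as $q^*$ and then apply the subsequence principle, whereas the paper proves $\liminf_n J(q_n)\geq J(q^*)$ by contradiction and reads all three limits in \eqref{eq:stronglimit} directly off the comparison inequality for the whole sequence.
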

\begin{proof}
The closed ball in \eqref{eq:penalty} is weakly
compact. By Proposition~\ref{prop:weak}, the
spectral part of the functional is continuous
along weakly convergent sequences, while the
squared norms are weakly lower semicontinuous.
The direct method gives a minimum $q_n$.
Comparison with the admissible point $q^*$ yields
\begin{equation}\label{eq:basicpenalty}
 J(q_n)+\tfrac12\norm{q_n-q^*}_X^2
 +\tfrac n2\norm{F_m(q_n)-\Lambda}_{\R^m}^2\leq J(q^*).
\end{equation}
Thus the spectral residual tends to zero.
We show that $\liminf J(q_n)\geq J(q^*)$.
Otherwise, a subsequence with
$J(q_n)\leq J(q^*)-\eta$, $\eta>0$,
has a weak limit $\bar q$ in this ball.
By spectral continuity, $F_m(\bar q)=\Lambda$;
since $\norm{\bar q-q^*}_X\leq\rho<\rho_0$,
local minimality gives $J(\bar q)\geq J(q^*)$.
This contradicts weak lower semicontinuity of $J$.
Inequality \eqref{eq:basicpenalty} now gives all
three limits in \eqref{eq:stronglimit} for the
entire sequence.

For large $n$, the point $q_n$ lies in the
interior of the ball and in the open set
$\mathcal S_m$.
The usual stationarity condition for the
differentiable penalty functional and
\eqref{eq:HF} give
\[
 (q_n-q_0)+(q_n-q^*)+
       \sum_{k=1}^m n(\lambda_k(q_n)-\lambda_k^*)
                           \varphi_{k,n}^2=0.
\]
This is \eqref{eq:stationary}; strong convergence
of $q_n$ proves \eqref{eq:limitrepresentation}
and the equality in \eqref{eq:error}.
Finally,
$\norm{a_n}/\sqrt n=\sqrt n\norm{F_m(q_n)-\Lambda}\to0$.
\end{proof}

For a global minimizer, minimization in
\eqref{eq:penalty} can be performed over all of $X$:
the functional is coercive, and the proof is
unchanged.
The additional term $\frac12\norm{q-q^*}^2$
selects the chosen minimizer and ensures strong
convergence. It involves the unknown $q^*$,
so this construction is a theoretical
representation rather than a ready-to-use
reconstruction algorithm based on the data.

\begin{corollary}\label{cor:multipliers}
If the multipliers $a_n$ in
Theorem~\ref{thm:limit} have a bounded subsequence,
then the finite representation \eqref{eq:finite}
holds for some $a\in\R^m$, in terms of eigenfunctions
of $q^*$ itself.
If $q^*\in\mathcal R_m$, the entire sequence $a_n$
converges to the unique multiplier of
Theorem~\ref{thm:finite}.
If $\norm{a_n}\to\infty$ along a subsequence,
then every limit of the normalized multipliers
$a_n/\norm{a_n}$ is a unit vector $c$ with
\begin{equation}\label{eq:abnormal}
 \sum_{k=1}^m c_k\varphi_k(q^*)^2=0.
\end{equation}
\end{corollary}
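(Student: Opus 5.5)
The plan is to pass to the limit in the stationarity identity \eqref{eq:stationary}. To do so we need the eigenfunctions $\varphi_{k,n}$ to converge to $\varphi_k(q^*)$ strongly enough that their squares converge in $X$.

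\emph{Step 1: convergence of the squared eigenfunctions.} Since $q^*\in\mathcal S_m$ and $q_n\to q^*$ strongly in $X$ by \eqref{eq:stronglimit}, the eigenvalues $\lambda_k(q_n)$, $k\le m$, are simple for large $n$. Analytic perturbation theory, already used to define $\varphi_k$ on $\mathcal S_m$, shows that the spectral projections $P_k(q_n)$ converge to $P_k(q^*)$. The convergence holds in operator norm on $\HH$ and also as maps into $\Q$. To justify this, we use the form estimate from the proof of Proposition~\ref{prop:weak}: strong convergence of the potentials gives $\varepsilon_n\to0$ there, so the resolvents converge in norm. After choosing signs, $\varphi_{k,n}\to\varphi_k(q^*)$ in $\Q$. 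By the compact (indeed continuous) embedding $\Q\hookrightarrow C(\Gamma)$ and the continuity of $u\mapsto u^2$ from $C(\Gamma)$ to $X$, we obtain $g_k(q_n)\to g_k(q^*)$ in $X$. Signs are irrelevant for the squares.

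\emph{Step 2: bounded subsequence.} Suppose $a_{n_j}\to a$ along a subsequence. Then the left side of \eqref{eq:stationary} converges in $X$ to $\sum_k a_k\varphi_k(q^*)^2$, while the right side tends to $q_0-q^*$. This gives \eqref{eq:finite}.

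\emph{Step 3: the case $q^*\in\mathcal R_m$.} The key point is to show that $(a_n)$ is bounded. Write $G_n=G(q_n)$, and take the $X$-inner product of \eqref{eq:stationary} with $g_i(q_n)$. This yields $G_na_n=\bigl(\ip{q_0+q^*-2q_n}{g_i(q_n)}_X\bigr)_i$. The right-hand side is bounded. Moreover $G_n\to G(q^*)$, which is invertible because $d_m(q^*)>0$. Hence $a_n=G_n^{-1}(\ldots)$ converges to $G(q^*)^{-1}\bigl(\ip{q_0-q^*}{g_i(q^*)}\bigr)_i$. By Step 2 and the uniqueness in Theorem~\ref{thm:finite}, this limit is the multiplier $a$ of that theorem.

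\emph{Step 4: unbounded case.} Suppose $\norm{a_{n_j}}\to\infty$. Divide \eqref{eq:stationary} by $\norm{a_{n_j}}$. The right-hand side is bounded in $X$, so after division it tends to zero. The unit vectors $a_{n_j}/\norm{a_{n_j}}$ have limit points $c$ with $\norm{c}=1$. By Step 1, the left-hand side then converges to $\sum_k c_k\varphi_k(q^*)^2$, and this gives \eqref{eq:abnormal}.

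The main obstacle is Step 1: we need convergence of the eigenfunctions in a norm strong enough that their squares converge in $L^2$, not merely weakly. The approach via norm-resolvent convergence and the $\Q$-bound, combined with the compact embedding, should handle it. Alternatively, one can use directly the local analyticity of $q\mapsto\varphi_k(q)\in\DD_0$ stated before \eqref{eq:HF}.
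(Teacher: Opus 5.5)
Your proof is correct and is essentially the paper's argument: the squares $\varphi_{k,n}^2$ converge to $g_k(q^*)$ in $X$ (the paper simply invokes local analyticity, your stated alternative to the norm-resolvent route), and one passes to the limit in \eqref{eq:stationary}, either along a convergent subsequence of bounded $a_n$ or after dividing by $\norm{a_n}$. The only difference is cosmetic: in the regular case you solve $G(q_n)a_n=\bigl(\ip{q_0+q^*-2q_n}{g_i(q_n)}_X\bigr)_i$ directly, which gives the explicit limit, while the paper first bounds $a_n$ via $\gamma\norm{a_n}_{\R^m}^2\le a_n^TG(q_n)a_n=\norm{\sum_k a_{k,n}\varphi_{k,n}^2}_X^2$ and then uses uniqueness of the limit point.
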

\begin{proof}
Local analyticity gives
$\varphi_{k,n}^2\to g_k(q^*)$ in $X$.
Along a bounded subsequence, take a further
subsequence with $a_n\to a$ and pass to the
limit in \eqref{eq:stationary}.
If $G(q^*)$ is positive definite, then, for
some $\gamma>0$ and all large $n$,
\[
 \gamma\norm{a_n}_{\R^m}^2
 \leq a_n^TG(q_n)a_n
 =\left\|\sum_k a_{k,n}\varphi_{k,n}^2\right\|_X^2.
\]
The right-hand side is bounded by
\eqref{eq:stationary}. The multipliers are
bounded, and uniqueness of the limiting
multiplier gives convergence of the entire
sequence.
In the unbounded case, divide
\eqref{eq:stationary} by $\norm{a_n}$ and use
compactness of the unit sphere in $\R^m$;
the right-hand side tends to zero, giving
\eqref{eq:abnormal}.
\end{proof}

When the squares are dependent, a finite
formula, if it exists, does not determine
the coefficients uniquely.
With $b_i=\ip{q^*-q_0}{g_i(q^*)}_X$ as before,
all coefficients are of the form
\[
 a=-G(q^*)^+b+c,\qquad c\in\ker G(q^*),
\]
where $G^+$ is the Moore--Penrose pseudoinverse,
and $-G^+b$ has the smallest Euclidean norm.
Without a previously established finite formula,
the pseudoinverse gives only the orthogonal
projection of the correction onto
$\Span\{g_1(q^*),\ldots,g_m(q^*)\}$, rather than
the entire correction.
Likewise, convergence of individual squares
with unbounded $a_n$ does not allow them to
be replaced in \eqref{eq:limitrepresentation}
by the squares at $q^*$: this would require
an estimate of the product of the
eigenfunction error and the multipliers.

\section{Conclusion}

For finite strictly ordered data, the OISP
on a fixed graph always has a solution.
On a tree, simple selected eigenvalues
ensure regularity and a finite formula
for the optimal correction.
On a graph with a cycle, regularity may
fail, but independence of the squares is
generic under variation of the potential.
For every local minimizer with simple
selected eigenvalues, the strong limiting
representation \eqref{eq:limitrepresentation}
remains valid.



\end{document}